\newtheorem{theorem}{Theorem}[section]
\newtheorem{lemma}[theorem]{Lemma}
\theoremstyle{definition}
\newtheorem{hypothesis}[theorem]{Hypothesis}
\newtheorem{corollary}[theorem]{Corollary}
\theoremstyle{remark}
\newtheorem{remark}[theorem]{Remark}
\numberwithin{equation}{section}
\newcommand{\bbR}{\mathbb R}
\newcommand{\bbD}{\mathbb D}
\newcommand{\bbC}{\mathbb C}
\newcommand{\bbN}{\mathbb N}
\renewcommand{\epsilon}{\varepsilon}
\newcommand{\be}{\begin{equation}}
\newcommand{\ee}{\end{equation}}
\newcommand{\R}{\mathbb{R}}
\newcommand{\cB}{{\mathcal B}}
\newcommand{\cH}{{\mathcal H}}
\newcommand{\cM}{{\mathcal M}}
\newcommand{\cU}{{\mathcal U}}
\renewcommand{\Im}{{\ensuremath{\mathrm{Im}}}}
\newcommand{\fm}{\mathfrak{m}}
\DeclareMathOperator{\Ran}{\mathrm{Ran}}
\DeclareMathOperator{\Ker}{\mathrm{Ker}}
\newcommand{\linspan}{\mathrm{lin\ span}}
\newcommand{\Dom}{\mathrm{Dom}}
\newcommand{\dom}{\mathrm{Dom}}
\begin{document}

\title{On the Weyl-Titchmarsh and Liv\v{s}ic  functions}

\author{K. A. Makarov}
\address{Department of Mathematics, University of Missouri, Columbia, MO 63211, USA}
\email{makarovk@missouri.edu}

\author{E. Tsekanovski\u{i} }
\address{
 Department of Mathematics, Niagara University, P.O. Box 2044,
NY  14109, USA } 
\email{tsekanov@niagara.edu}

\dedicatory
{Dedicated with great pleasure to our friend and colleague Fritz
Gesztesy on the occasion of his 60th birthday anniversary}

\subjclass[2010]{Primary: 81Q10, Secondary: 35P20, 47N50}

\keywords{Deficiency indices, quasi-self-adjoint extensions, 
Weyl--Titchmarsh functions}

\begin{abstract} We establish a mutual
relationship between
main  analytic objects for the dissipative extension theory of 
a symmetric operator $\dot A$ 
with deficiency indices $(1,1)$. In particular, 
we introduce the Weyl-Titchmarsh function $\cM$ 
of a maximal dissipative extension
$\widehat A$ of the symmetric operator $\dot A$. 
Given   a
reference self-adjoint extension  $A$ of
 $\dot A$, we introduce  a von Neumann parameter
 $\kappa$, $|\kappa|<1$, characterizing the domain 
of the dissipative extension $\widehat A$ against $\Dom (A)$ and show
that the pair $(\kappa, \cM)$  is a complete  unitary 
  invariant    of the triple $(\dot A, A, 
\widehat A)$, unless $\kappa=0$.
As a by-product of our considerations we obtain a relevant 
 functional model for
 a  dissipative
operator and get an analog of the formula of Krein for its resolvent.

\end{abstract}

 \maketitle

\section{Introduction}

In 1946 M. Liv\v{s}ic obtained the following remarkable result
\cite[a part of Theorem 13]{L} (for a textbook
exposition  see \cite{AkG}).
\begin{theorem}[\cite{L}]\label{thm1}
Suppose that $\dot A$ and $\dot B$
 are closed, prime\footnote{A symmetric operator $\dot A$ is prime
if there does not exist a subspace invariant under $\dot A$ such that
the restriction of $\dot A$ to this subspace is self-adjoint.},
 densely defined symmetric operators with deficiency indices $(1,1)$.
Then $\dot A$ and $\dot B$ are unitarily equivalent if, and only
if, for an appropriate choice of normalized deficiency elements
$$g_\pm\in \Ker( (\dot A)^*\mp iI) \quad \text{and }\quad
f_\pm\in \Ker( (\dot B)^*\mp iI)$$
the following equality
$$
\frac{(g_z, g_-)}{(g_z, g_+)}=\frac{(f_z, f_-)}{(f_z, f_+)}, \quad z\in \bbC_+,
$$
holds, where $ g_z\ne 0$
and $ f_z\ne 0$
are arbitrary  deficiency elements of the symmetric operators $\dot A$
 and $\dot B$,
$$ g_z\in \Ker( (\dot A)^*-z I) \quad \text{and}\quad 
 f_z\in \Ker( (\dot B)^*-z I).$$
\end{theorem}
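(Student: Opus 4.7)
Necessity is a direct check: a unitary $U:\cH_{\dot A}\to\cH_{\dot B}$ with $U\dot AU^{-1}=\dot B$ intertwines the adjoints too, so it maps each deficiency subspace $\Ker((\dot A)^*-zI)$ isometrically onto $\Ker((\dot B)^*-zI)$; taking $f_\pm:=Ug_\pm$ and $f_z:=Ug_z$ yields the claimed equality by preservation of inner products. For the substantive direction (sufficiency) my plan is to use the Liv\v{s}ic function
$$s(z):=\frac{(g_z,g_-)}{(g_z,g_+)}$$
as a complete unitary invariant and to construct the intertwining unitary directly from the deficiency data.

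First I would record that $s$ is well defined on $\bbC_+$ (the scalar freedom in $g_z$ cancels in the ratio), and that by primeness the closed linear span $\cL_{\dot A}:=\overline{\linspan\{g_z:z\in\bbC_+\}}$ coincides with the whole space (and likewise $\cL_{\dot B}=\cH_{\dot B}$): its orthogonal complement is invariant under $\dot A$, the restriction there has zero deficiency indices and hence is self-adjoint, which contradicts primeness unless the complement is trivial.

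The core step is to express every inner product $(g_z,g_w)$, $z,w\in\bbC_+$, in terms of $s$ alone, once the $g_z$ are normalized coherently. Concretely, I would fix a reference self-adjoint extension $A$ of $\dot A$, pick the canonical representative $g_z:=g_++(z-i)(A-zI)^{-1}g_+$ (so that $g_z|_{z=i}=g_+$), and use the first resolvent identity together with self-adjointness of $A$ to write $(g_z,g_w)$ in closed form in terms of the Weyl--Titchmarsh function $M_A(z):=((A-zI)^{-1}g_+,g_+)$. A separate computation, using the von Neumann description of $\Dom(A)$ as $\Dom(\dot A)+\bbC(g_++\theta g_-)$ with $|\theta|=1$, relates $s$ and $M_A$ by an explicit M\"obius transformation whose coefficients depend only on $\theta$ and the scalar $(g_+,g_-)=s(i)$. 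Inverting it, $M_A$ is recovered from $s$, so the kernel $K(z,w):=(g_z,g_w)$ is a function of $s$ alone; performing the parallel construction on the $\dot B$ side with the same $\theta$ produces the same kernel.

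With the two kernels identified by $s_{\dot A}=s_{\dot B}$, the map $Ug_z:=f_z$ preserves inner products on generators and extends linearly to an isometry $\cL_{\dot A}\to\cL_{\dot B}$, hence by density to a unitary $U:\cH_{\dot A}\to\cH_{\dot B}$. Since $(\dot A)^*g_z=zg_z$ and $(\dot B)^*f_z=zf_z$, $U$ intertwines $(\dot A)^*$ with $(\dot B)^*$ on the dense set $\cL_{\dot A}$; by closedness this extends to the full domains, and passing to adjoints yields $U\dot AU^{-1}=\dot B$. The main obstacle I anticipate is the inner-product computation of the core step: although $(g_z,g_w)$ is intrinsic once $g_z$ is normalized, any direct derivation routes through the auxiliary extension $A$, and one must verify that the dependence on the von Neumann parameter $\theta$ cancels, leaving an expression in $s$ alone; once this cancellation is secured, the remaining steps are formal.
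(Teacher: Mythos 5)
Your necessity argument is fine, and your overall strategy for sufficiency --- normalize the $g_z$ coherently through a reference self-adjoint extension, show the Gram kernel $(g_z,g_w)$ is determined by $s$ alone, and send $g_z\mapsto f_z$ --- is essentially the route the paper gestures at: it quotes Theorem \ref{thm1} from Liv\v{s}ic and recovers the equivalence from Theorem \ref{drob} together with Donoghue's model (Theorem \ref{fund2}), since $s$ determines $M=\frac1i\frac{s+1}{s-1}$, hence the measure $\mu$, hence the pair up to unitary equivalence, and indeed $(g_z,g_w)=\frac{M(z)-\overline{M(w)}}{z-\overline{w}}$ with your normalization. The obstacle you anticipate (dependence on the von Neumann parameter $\theta$) is not a real one: the theorem lets you choose $g_\pm,f_\pm$ ``appropriately,'' so you may impose $g_+-g_-\in\Dom(A)$ and $f_+-f_-\in\Dom(B)$ on both sides and nothing needs to cancel.

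The genuine gap is your density claim: primeness does \emph{not} imply $\overline{\linspan\{g_z:z\in\bbC_+\}}=\cH_{\dot A}$. Take $d\mu(\lambda)=\pi^{-1}d\lambda$ and let $\dot A=\dot\cB$ be the model operator of Theorem \ref{modsym}, which is prime; then $g_z(\lambda)=(\lambda-z)^{-1}$ and, for every $z\in\bbC_+$,
\begin{equation*}
(g_z,g_-)=\frac1\pi\int_\bbR\frac{d\lambda}{(\lambda-z)(\lambda-i)}=0
\end{equation*}
by closing the contour in the lower half-plane, so $0\ne g_-\perp\cL_{\dot A}$ (here $s\equiv 0$, $M\equiv i$, and the orthogonal complement is in fact infinite-dimensional, a copy of a Hardy space). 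The correct statement is that primeness is equivalent to density of the span of the deficiency subspaces over \emph{both} half-planes; your justification fails exactly here, because the complement of the span over one half-plane carries a restriction that is at best maximal symmetric with unequal deficiency indices, not self-adjoint. The repair is cheap --- run the argument with $z\in\bbC_+\cup\bbC_-$, noting that the kernel there is still determined by $s$ on $\bbC_+$ via $M(\overline z)=\overline{M(z)}$ --- but as written the unitary $U$ is only defined on a proper closed subspace. A secondary, more routine point: passing from ``$U$ intertwines $(\dot A)^*$ on the linear span of the $g_z$'' to $U\dot AU^{-1}=\dot B$ requires knowing that this span is a core for $(\dot A)^*$ in the graph norm, which you do not justify; it is cleaner to verify instead that $U$ intertwines the resolvents of the reference extensions $A$ and $B$ and then cut down to $\dot A$ using the membership $g_+-g_-\in\Dom(A)$.
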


Liv\v{s}ic suggested to call the function
\begin{equation}\label{charsym}
s(z)=\frac{z-i}{z+i}\cdot \frac{(g_z, g_-)}{(g_z, g_+)}, \quad z\in \bbC_+,
\end{equation}
{\it the characteristic function} of the symmetric operator $\dot A$.

Theorem \ref{thm1} identifies the function $s(z)$ (modulo $z$-independent unimodular factor) with
a complete unitary invariant  of a prime
 symmetric operator with deficiency indices $(1,1)$
that  determines the operator uniquely up to unitary equivalence.

Liv\v{s}ic also gave  a criterion \cite[Theorem 15]{L} (also see \cite{AkG})
 for a contractive analytic mapping
 from the upper half-plane  $\bbC_+$ to the unit disk
$\bbD$ to  be the characteristic function of a densely defined
symmetric operator  with deficiency indices $(1,1)$.

\begin{theorem}[\cite{L}]\label{thm12}\label{nach1} For an
 analytic mapping $s$ from the upper half-plane to the unit disk
to be the characteristic function of a densely defined
symmetric operator  with deficiency indices $(1,1)$
it is necessary and sufficient that
\begin{equation}\label{vsea0}
s(i)=0\quad \text{and}\quad \lim_{z\to \infty}
z(s(z)-e^{2i\alpha})=\infty \quad \text{for all} \quad  \alpha\in
[0, \pi),
\end{equation}
$$
0< \varepsilon \le \text{arg} (z)\le \pi -\varepsilon.
$$
\end{theorem}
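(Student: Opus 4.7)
I would treat necessity and sufficiency separately, mediating between the half-plane picture for $\dot A$ and the disk picture via the Cayley transform.

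\emph{Necessity.} Assume $s$ is the characteristic function \eqref{charsym} of some densely defined prime symmetric $\dot A$ with deficiency indices $(1,1)$. The equality $s(i)=0$ is immediate since the Blaschke factor $(z-i)/(z+i)$ in \eqref{charsym} vanishes at $z=i$. For the asymptotic condition, recall that the self-adjoint extensions of $\dot A$ are parametrized by $\alpha\in[0,\pi)$: $A_\alpha$ is the restriction of $(\dot A)^*$ to $\Dom(\dot A)\dotplus\linspan\{g_+ + e^{2i\alpha}g_-\}$. A direct computation starting from \eqref{charsym} shows that $s(z)=e^{2i\alpha}$ is equivalent to $z$ being a spectral point of $A_\alpha$, and more quantitatively that the rate at which $s(z)\to e^{2i\alpha}$ in a conical sector as $z\to\infty$ is controlled by the norm of the resolvent of $A_\alpha$. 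Since each $A_\alpha$ is unbounded self-adjoint (the underlying $\dot A$ being unbounded), its resolvent cannot decay like $1/z$ with a sectorially uniform constant, which is exactly \eqref{vsea0}.

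\emph{Sufficiency.} Given such $s$, pass to the disk via
\[
\Theta(\zeta) = s\!\left(i\,\tfrac{1+\zeta}{1-\zeta}\right),\qquad \zeta\in\bbD,
\]
so $\Theta:\bbD\to\overline{\bbD}$ is analytic with $\Theta(0)=s(i)=0$. By the Sz.-Nagy--Foia\c{s} model (equivalently, by the construction already indicated in \cite{L}), $\Theta$ is the characteristic function of a completely non-unitary contraction $V$ on some Hilbert space $\cH$ with defect indices $(1,1)$, unique up to unitary equivalence. The behaviour of $s$ at $\infty$ encoded in \eqref{vsea0} translates, at the boundary point $\zeta=1$, into the simultaneous assertions that $V$ is in fact an isometry and that $1$ is not an eigenvalue of $V^*$. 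Setting
\[
\dot A = i(I+V)(I-V)^{-1},\qquad \Dom(\dot A)=\Ran(I-V),
\]
one obtains a densely defined symmetric operator with deficiency indices $(1,1)$. Taking deficiency elements from the one-dimensional defect subspaces of $V$ and substituting into \eqref{charsym}, one checks that the characteristic function of $\dot A$ coincides with $s$.

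\emph{Main obstacle.} The delicate step is the precise matching between the uniform non-tangential condition \eqref{vsea0} and the twin requirements that $V$ be isometric and $1\notin$ point spectrum of $V^*$ (the latter being equivalent to density of $\Dom(\dot A)$). Concretely, for a Schur function $\Theta$ with $\Theta(0)=0$, an asymptotic $\Theta(\zeta)=\omega+c(\zeta-1)+o(\zeta-1)$ along radial approach to $1$ with $|\omega|=1$ is precisely equivalent to $\omega$ being a boundary eigenvalue of the associated contraction, and it is the exclusion of this behaviour for \emph{every} unimodular $\omega=e^{2i\alpha}$ that delivers both isometry and density. Handling this equivalence uniformly in the sector $\epsilon\le\arg z\le\pi-\epsilon$ is where the argument requires the most care.
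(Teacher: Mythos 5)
First, a point of reference: the paper does not prove Theorem~\ref{nach1} at all --- it is quoted from \cite{L} (see also \cite{AkG}) --- so there is no in-paper argument to compare yours with. The proof that the paper's own machinery suggests is, however, quite different from and shorter than yours: put $M=\frac1i\cdot\frac{s+1}{s-1}$, a Herglotz function normalized by $M(i)=i$, write $M(z)=bz+\int_\bbR\bigl(\frac{1}{\lambda-z}-\frac{\lambda}{1+\lambda^2}\bigr)d\mu(\lambda)$, and observe that since $z(s(z)-1)=-2iz/(M(z)+i)$ and $M(z)/z\to b$ in the sector, the condition in \eqref{vsea0} for $\alpha=0$ is equivalent to $b=0$, while for $\alpha\in(0,\pi)$ one has $s(z)-e^{2i\alpha}=(1-e^{2i\alpha})\frac{M(z)+\cot\alpha}{M(z)+i}$, and the finiteness of $\mu$ would produce exactly one $\alpha$ (the one with $\cot\alpha=-\lim M$) at which $z(s(z)-e^{2i\alpha})$ has a finite limit. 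Thus \eqref{vsea0} is equivalent to ``$b=0$ and $\mu(\bbR)=\infty$''; sufficiency is then Theorem~\ref{modsym} applied to this $\mu$, and necessity is Donoghue's representation \eqref{hernev} with infinite measure combined with Theorem~\ref{drob}.

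Measured against this, your necessity argument rests on a false step: for \emph{every} self-adjoint operator $A_\alpha$ one has $\|(A_\alpha-zI)^{-1}\|\le|\Im z|^{-1}\le(|z|\sin\varepsilon)^{-1}$ in the sector $\varepsilon\le\arg z\le\pi-\varepsilon$, so the resolvent of an unbounded self-adjoint operator most certainly does decay like $1/z$ there with a sectorially uniform constant; unboundedness of $\dot A$ cannot carry the argument. (Relatedly, $s(z)=e^{2i\alpha}$ at an interior point $z\in\bbC_+$ would force $|s|\equiv1$, so it cannot be ``equivalent to $z$ being a spectral point of $A_\alpha$''; and for most $\alpha$ the difference $s(z)-e^{2i\alpha}$ does not tend to $0$ at all, so \eqref{vsea0} is not a rate statement for every $\alpha$.) The correct mechanism is that $g_+\in\Ker((\dot A)^*-iI)$ can never belong to $\Dom(A_\alpha)$ (otherwise $A_\alpha g_+=ig_+$), which is precisely what makes the measure $(1+\lambda^2)\,d(E_{A_\alpha}(\lambda)g_+,g_+)$ infinite and removes the linear term. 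On the sufficiency side, the claim that the model contraction $V$ ``is in fact an isometry'' contradicts your own requirement that $V$ have defect indices $(1,1)$ (an isometry has $D_V=0$). What you need is that $V$ is a \emph{partial} isometry, and that is delivered by $\Theta(0)=s(i)=0$ alone, since $\Theta_V(0)=-V|_{\mathfrak D_V}$; the condition at infinity in \eqref{vsea0} is responsible only for the density of $\Dom(\dot A)=\Ran(I-V)$, i.e.\ for excluding a nondensely defined (or multivalued) Cayley preimage. So the architecture of your sufficiency half (Cayley transform, model contraction, boundary behaviour at $\zeta=1$) is viable and close to Liv\v{s}ic's original route, but the division of labour between the two hypotheses in \eqref{vsea0} is misassigned, and the necessity half has to be rebuilt on the asymptotics of $M$ rather than on operator-norm resolvent bounds.
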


In the same article, Liv\v{s}ic
put forward a concept of a characteristic function of
a quasi-self-adjoint dissipative extension
of a symmetric operator with deficiency indices $(1,1)$.

Let us recall Liv\v{s}ic's construction.

Suppose that $\dot A$ is a symmetric operator with deficiency indices $(1,1)$
and that
$g_\pm$ are its normalized deficiency elements,
$$
g_\pm \in \Ker ((\dot A)^*\mp i I), \quad \|g_\pm\|=1.
$$

Suppose that $\widehat A\ne (\widehat A)^*$ is a  maximal dissipative
extension of $\dot A$, 
$$\Im(\widehat Af,f)\ge 0, \quad f\in \Dom(\widehat A)
.$$
Since $\dot A$ is symmetric, its dissipative extension $\widehat A$ 
is automatically quasi-self-adjoint \cite{Ph}, \cite{St68}, 
that  is, 
$$
\dot A \subset\widehat A\subset (\dot A)^*,
$$
and hence 
\begin{equation}\label{parpar}
g_+-\kappa g_-\in \Dom
 (\widehat A)\quad \text{for some }
|\kappa|<1.
\end{equation}
Based on the  parameterization \eqref{parpar} of the domain of the
 extension $\widehat A$, which is an analog of the von Neumann
formulae\footnote{Throughout this paper  $\kappa$ will be
called the von Neumann extension parameter.},
Liv\v{s}ic suggested to call  the M\"obius transformation
\begin{equation}\label{ch12}
S(z)=\frac{s(z)-\kappa} {\overline{ \kappa }\,s(z)-1}, \quad z\in \bbC_+,
\end{equation}
where $s$ is given  by \eqref{charsym},
the characteristic function of the dissipative extension $\widehat A$
\cite{L}.

A culminating point of Liv\v{s}ic's considerations was the discovery of the
 following result \cite[the remaining  part of Theorem 13]{L}.

\begin{theorem}[\cite{L}]\label{thmdiss} Suppose that $\dot A$ and $\dot B$
 are closed prime densely defined symmetric operators with deficiency
indices $(1,1)$. Assume, in addition,  that
$\widehat A$ and $\widehat B$ are their maximal dissipative
extensions, respectively ($\widehat A\ne (\widehat A)^*$, 
$\widehat B\ne (\widehat B)^*$).

Then, $\widehat A$ and $\widehat B$
are unitarily equivalent if, and only if, the corresponding
characteristic functions
coincide up to a unimodular constant factor.
\end{theorem}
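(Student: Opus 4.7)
The plan hinges on two features of the M\"obius transform \eqref{ch12}. First, Theorem \ref{thm12} gives $s(i)=0$, so evaluating \eqref{ch12} at $z=i$ yields $S(i)=\kappa$. Second, the map $w\mapsto(w-\kappa)/(\bar\kappa w-1)$ is an involutive automorphism of the unit disk, so $s$ is recovered from $S$ and $\kappa$ via the same formula. Hence the pair $(s,\kappa)$ is equivalent data to $S$, and the claim should follow by grafting this observation onto Theorem \ref{thm1}.

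Sufficiency. Assume $S_{\widehat A}(z)=c\,S_{\widehat B}(z)$ with $|c|=1$. Evaluating at $z=i$ gives $\kappa_A=c\kappa_B$, and inverting \eqref{ch12} gives $s_A(z)=c\,s_B(z)$. By Theorem \ref{thm1} there is a unitary $U\colon\cH_A\to\cH_B$ with $U\dot A\, U^{-1}=\dot B$. Since $U$ then intertwines $(\dot A)^*$ with $(\dot B)^*$, it carries each one-dimensional deficiency subspace of $\dot A$ onto the corresponding subspace of $\dot B$, so $Ug_\pm=e^{i\gamma_\pm}f_\pm$ for some phases $\gamma_\pm$. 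A short computation comparing the ratios $(g_z,g_-)/(g_z,g_+)$ with $(Ug_z,f_-)/(Ug_z,f_+)$ forces $e^{-i(\gamma_--\gamma_+)}=c$. Combined with $\kappa_B=\kappa_A/c$, this makes $U(g_+-\kappa_Ag_-)$ a scalar multiple of $f_+-\kappa_Bf_-$, and therefore $U\Dom(\widehat A)=\Dom(\widehat B)$, yielding $U\widehat A\,U^{-1}=\widehat B$.

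Necessity. Suppose $U\widehat A\,U^{-1}=\widehat B$. The underlying symmetric operator is intrinsic to a maximal dissipative extension as its largest symmetric restriction, namely $\dot A=\widehat A\cap(\widehat A)^*$ in the sense of graphs, so automatically $U\dot A\,U^{-1}=\dot B$. Theorem \ref{thm1} produces a unimodular $c$ with $s_A=c\,s_B$, and the phase-tracking calculation of the previous paragraph again gives $e^{-i(\gamma_--\gamma_+)}=c$ for the induced action $Ug_\pm=e^{i\gamma_\pm}f_\pm$. Applying $U$ to the defect vector $g_+-\kappa_Ag_-\in\Dom(\widehat A)$ and using $U\Dom(\widehat A)=\Dom(\widehat B)$ forces $\kappa_A=c\kappa_B$; plugging both identities into \eqref{ch12} yields $S_{\widehat A}=c\,S_{\widehat B}$.

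The principal obstacle is disciplined bookkeeping of unimodular phases: both $s$ and $\kappa$ depend on the choice of normalized $g_\pm$, and one must verify that these two ambiguities combine through \eqref{ch12} precisely into the single unimodular ambiguity of $S$, so that no invariant is hidden in $(s,\kappa)$ beyond what $S$ already detects. Primeness enters essentially, as it is the hypothesis that converts the equality of characteristic functions into an actual unitary intertwiner via Theorem \ref{thm1}.
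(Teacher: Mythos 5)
Your strategy is essentially the one the paper itself uses for its refinement of this statement (Theorem \ref{fund3}): since $s(i)=0$ one has $\kappa=S(i)$, the M\"obius map in \eqref{ch12} is an involution of the disk, so $S$ and the pair $(s,\kappa)$ are equivalent data; the uniqueness theorem for the underlying symmetric operator then yields the intertwining unitary, and the von Neumann decomposition $\Dom(\widehat A)=\Dom(\dot A)\dot+\linspan\{g_+-\kappa g_-\}$ transports the dissipative extension. Your necessity half, including the recovery of $\dot A$ as the largest symmetric restriction of $\widehat A$, is correct: there the phase relation $s_B=e^{i(\gamma_--\gamma_+)}s_A$ and the identity $\kappa_B=\kappa_Ae^{i(\gamma_--\gamma_+)}$ are read off directly from the given unitary, and they combine into $S_A=cS_B$ without any case distinction.

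The sufficiency half, however, has a genuine gap at the step ``a short computation comparing the ratios forces $e^{-i(\gamma_--\gamma_+)}=c$.'' That computation gives $s_B=e^{i(\gamma_--\gamma_+)}s_A$, which together with $s_A=c\,s_B$ determines the phase only if $s_A\not\equiv 0$. But $s\equiv 0$ does occur for a closed, prime, densely defined symmetric operator with deficiency indices $(1,1)$: in the model of Theorem \ref{fund2}(ii) take $\mu=\pi^{-1}d\lambda$, for which $M\equiv i$ and hence $s\equiv 0$ (this is exactly the case singled out in the remarks of Sections 4 and 5). Then $S_A\equiv\kappa_A$ and $S_B\equiv\kappa_B$ are constants, the hypothesis only gives $|\kappa_A|=|\kappa_B|$, and the unitary $U$ produced by Theorem \ref{thm1} may send $g_\pm$ to $e^{i\gamma_\pm}f_\pm$ with an uncontrolled phase difference, so $U(g_+-\kappa_Ag_-)$ need not lie in $\Dom(\widehat B)$. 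To close the argument you must additionally construct a unitary $V$ with $V\dot BV^{-1}=\dot B$ rotating $f_+$ and $f_-$ by independently prescribed phases; such a $V$ exists precisely because $s_B\equiv 0$, but this requires a separate argument (e.g., Theorem \ref{thm1} applied to $\dot B$ against itself with re-phased deficiency elements, plus a check of how the resulting unitary acts on the deficiency subspaces). The paper's Theorem \ref{fund3} avoids this issue entirely by fixing reference self-adjoint extensions and demanding exact equality $S_A=S_B$: the normalization $f_+-f_-\in\Dom(B)$ leaves only a common phase in $f_\pm$, which cancels in $f_+-\kappa f_-$.
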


In 1965  Donoghue \cite{D}
 introduced a concept of the Weyl-Titchmarsh function $M(\dot A, A)$
associated with a pair $(\dot A, A)$
by
$$M(\dot A, A)(z)=
((Az+I)(A-zI)^{-1}g_+,g_+), \quad z\in \bbC_+,
$$
$$g_+\in \Ker( (\dot A)^*-iI),\quad \|g_+\|=1,
$$where $\dot A $ is
 a symmetric operator with deficiency indices $(1,1)$, 
$\text{def}(\dot A)=(1,1)$,
 and $A$ is
its self-adjoint extension\footnote{The concept of the Weyl--Titchmarsh function in the general case where 
$\text{def}(\dot A)=(n,n)$, $n\in \bbN\cup\{\infty\}$ is due to
Saakjan \cite{S1965}. 
Different approaches to
 the concept can be found in \cite{ABT}, 
\cite{DM},   \cite{GMT}, \cite{GT}, \cite{KL} and the  bibliography therein.
}.

Furthermore, Donoghue  showed that
the Weyl-Titchmarsh function admits the following Herglotz-Nevanlinna representation
\begin{equation}\label{hernev}
M(\dot A, A)(z)=\int_\bbR \left
(\frac{1}{\lambda-z}-\frac{\lambda}{1+\lambda^2}\right )
d\mu
\end{equation}
for some infinite Borel measure $\mu$ such that 
$$
\int_\bbR\frac{d\mu(\lambda)}{1+\lambda^2}=1,
$$
and discovered  the following result
(for a full presentation
 see
\cite{GT}).

\begin{theorem}[\cite{D}, \cite{GT}]\label{fund2}

Suppose that $\dot A$ and $\dot B$
 are  closed, prime, symmetric operators
with deficiency indices $(1,1)$. Assume, in addition, that
  $A$ and $B$ are some self-adjoint
extensions of $\dot A$ and $\dot B$, respectively.

Then
\begin{itemize}
\item[(i)] the pairs $(\dot A, A)$ and $(\dot B, B)$ are unitarily
equivalent\footnote{We say that pairs of operators
$(\dot A, A)$ and $(\dot B, B)$ in Hilbert spaces $\cH_A$ and $\cH_B$
are unitarily equivalent if there is
a unitary map $\cU$ from $\cH_A$ onto $\cH_B$ such that
$\dot B=\cU\dot A\cU^{-1}$ and  $ B=\cU A\cU^{-1}$.}
 if, and only if, the Weyl-Titchmarsh  functions $M(\dot A, A)$ and
$M(\dot B, B)$ coincide;

\item[(ii)] the pair $(\dot A, A)$ is unitarily equivalent to a
model pair $(\dot \cB, \cB)$ in the Hilbert space $L^2(\bbR;d\mu)$,
where
$\cB$ is the multiplication (self-adjoint)
operator by the  independent variable,
$\dot \cB$  is its
 restriction
on
\begin{equation}\label{nachalo}
\Dom(\dot \cB)=\left \{f\in \Dom(\cB)\, \bigg | \, \int_\bbR
f(\lambda)d \mu(\lambda) =0\right \},
\end{equation}
and $\mu$ is  the Borel  measure
from the Herglotz-Nevanlinna
representation \eqref{hernev}
for the Weyl-Titchmarsh function $M=M(\dot
A, A)$.

\end{itemize}
\end{theorem}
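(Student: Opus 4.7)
The plan is to establish part (ii) first and to deduce (i) as an essentially formal consequence. For (ii), the strategy is to pass to a scalar spectral representation of $A$, adjust the measure so as to obtain the stated Herglotz--Nevanlinna form, and then identify the image of $\Dom(\dot A)$ in the model.

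Since $\dot A$ is prime and has deficiency indices $(1,1)$, the cyclic subspace generated by $g_+$ under $A$ must exhaust $\cH$: otherwise, its orthogonal complement would be $A$-invariant and orthogonal to both deficiency subspaces of $\dot A$, and would therefore reduce $\dot A$ to a self-adjoint operator. The spectral theorem then furnishes a unitary map $U: \cH \to L^2(\bbR; d\nu)$ taking $A$ to multiplication by the independent variable and $g_+$ to the constant function $1$, where $\nu$ is a probability measure. Setting $d\mu = (1 + \lambda^2) d\nu$ and composing $U$ with the isometry $\phi \mapsto \phi/(\lambda - i)$ from $L^2(d\nu)$ onto $L^2(d\mu)$ preserves multiplication by $\lambda$ while sending $g_+$ to $(\lambda - i)^{-1}$. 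A direct computation of $((Az + I)(A - zI)^{-1} g_+, g_+)$ in this representation then recovers exactly \eqref{hernev}, together with the normalization $\int_\bbR (1 + \lambda^2)^{-1} d\mu = 1$.

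The heart of the argument --- and what I expect to be the main obstacle --- is the domain identification. Here I would first establish the intrinsic characterization
\[
\Dom(\dot A) = \{f \in \Dom(A) : ((A + iI) f, g_+) = 0\}.
\]
The inclusion ``$\subset$'' follows from $(\dot A)^* g_+ = i g_+$ by a one-line manipulation. The reverse inclusion is a codimension-one count based on the von Neumann decomposition $\Dom(A) = \Dom(\dot A) + \linspan\{g_+ - \kappa_0 g_-\}$ for some unimodular $\kappa_0$, together with the elementary computation $(A + iI)(g_+ - \kappa_0 g_-) = 2i g_+$, which shows that the functional $f \mapsto ((A + iI) f, g_+)$ does not vanish on the extra direction. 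Transported through the unitary equivalence, this functional becomes $\tilde f \mapsto \int_\bbR \tilde f(\lambda)\, d\mu(\lambda)$, yielding precisely the characterization \eqref{nachalo}.

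With (ii) in hand, part (i) follows with little extra work. The function $M(\dot A, A)$ is manifestly a unitary invariant of the pair, since a unitary intertwiner carries a normalized deficiency element of $\dot A$ to one of $\dot B$ (modulo a phase, which cancels in the defining inner product) and intertwines the functional calculi of $A$ and $B$. Conversely, if $M(\dot A, A) = M(\dot B, B)$, then uniqueness of the representing measure in the normalized Herglotz--Nevanlinna representation \eqref{hernev} forces the two spectral measures to coincide, and so both pairs are unitarily equivalent to the common model pair $(\dot \cB, \cB)$ in $L^2(\bbR; d\mu)$, and hence to each other.
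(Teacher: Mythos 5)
The paper does not actually prove Theorem~\ref{fund2}: it is imported from Donoghue \cite{D} and Gesztesy--Tsekanovskii \cite{GT}, with the reader referred to those sources ``for a full presentation.'' So there is no in-paper argument to compare against line by line; the closest material is Appendix~A (Theorem~\ref{modsym}), where the authors verify the converse direction --- that the model operator $\dot\cB$ built from an infinite measure $\mu$ with $\int_\bbR(1+\lambda^2)^{-1}d\mu=1$ is prime, closed, densely defined, symmetric with indices $(1,1)$, and has deficiency elements $(\lambda\mp i)^{-1}$. Your proposal is the standard proof and it is essentially correct: cyclicity of $g_+$ for $A$ from primeness, passage to the spectral representation with the weight $(1+\lambda^2)$ and the isometry $\phi\mapsto\phi/(\lambda-i)$, the identification $\Dom(\dot A)=\{f\in\Dom(A):((A+iI)f,g_+)=0\}$ via the von Neumann decomposition and $(A+iI)(g_+-\kappa_0 g_-)=2ig_+$, and then (i) as a formal consequence of (ii) plus uniqueness of the Herglotz--Nevanlinna representing measure. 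Your domain identification is precisely the dual of the computation $((\dot\cB\pm iI)f,g_\pm)=\int_\bbR f\,d\mu$ used in the Appendix. Two small points worth making explicit: (a) you should note that $\mu(\bbR)=\int_\bbR(1+\lambda^2)\,d\nu=\infty$, which follows because $g_+\in\Dom(A)$ would force $Ag_+=ig_+$, impossible for self-adjoint $A$; this is needed for the target model pair to have a densely defined $\dot\cB$. (b) Your cyclicity argument (``the orthogonal complement would reduce $\dot A$ to a self-adjoint operator'') tacitly uses the domain characterization you only prove afterwards, so the logical order should be reversed; also note that $g_-$ lies in the cyclic subspace of $g_+$ because $g_-$ is proportional to $(A-iI)(A+iI)^{-1}g_+$. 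Both are easily repaired and do not affect the validity of the argument.
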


 Theorem \ref{fund2}, on the one hand, recognizes the
Weyl-Titchmarsh function $M$ as
a (complete) unitary invariant  of the pair of a
 symmetric operator with deficiency indices $(1,1)$ and its self-adjoint extension
that determines the operators uniquely up to unitary equivalence.
On the other hand, this result
 provides a general model
 for a symmetric operator with deficiency indices $(1,1)$ and its family of self-adjoint extensions.

The main goal of this paper is
\begin{itemize}
\item[(i)] to establish a relationship between
the classical analytic objects of  the extension
 theory such as the characteristic function $s(z)$ of a symmetric operator
$\dot A$,
the characteristic function $S(z)$ of
its  dissipative extensions $\widehat A$,
 and the
 Weyl-Titchmarsh function $M(z)$  associated with the pair $(\dot A, A)$;
\item[(ii)] to introduce the
 Weyl-Titchmarsh function for  dissipative  extensions
$\widehat A$ of $\dot A$,
and
\item[(iii)]  to
 provide a functional model
for the  triple $(\dot A, \widehat A, A)$ and obtain an analog of the formula of Krein 
\cite{K46} 
for the resolvents of $\widehat A$ and $A$.
\end{itemize}

The paper is organized as follows.

In Sec.~2 we propose to consider the characteristic function of a symmetric operator
$\dot A$  to be the one associated
with a pair $(\dot A, A)$ where $A$ is   a special reference
self-adjoint extension  
 of $\dot A$
uniquely determined by the choice of the basis $\{g_\pm\}$
in the deficiency subspace (see, eq. \eqref{star}).
 We call this function {\it the Liv\v{s}ic
function} associated with the pair $(\dot A, A)$. For 
different definitions of the  characteristic functions in the unbounded case
we refer to 
\cite{Ko80},
\cite{Ku}, 
\cite{P}, 
\cite{St68},
and 
\cite{TS}.

We also  show that the Weyl-Titchmarsh  and the Liv\v{s}ic
functions associated with
the pair $(\dot A , A)$ are related by the Cayley transformation
 (see Theorem \ref{fund22} (ii)).
Based on this transformation law, we show that  Liv\v{s}ic's
Theorem \ref{thmdiss} and Donoghue's Theorem \ref{fund2}
 can be deduced from one another.

In Sec.~3 we introduce the Weyl-Titchmarsh function of a dissipative
quasi-self-adjoint extension $\widehat A$ of $\dot A$
and extend Theorem \ref{fund22} to the dissipative
 case (see Theorem \ref{start}).

In Sec.~4 we introduce  the characteristic function
 associated with the
 triple of operators $(\dot A, \widehat A, A)$ and provide
a functional model for such  triples, continuing a list  of various
functional models for non-self-adjoint operators
discovered  by M. Liv\v{sic} \cite{L}, \cite{L1},
B. Sz.-Nagy and C. Foias \cite{NF}, L. de Branges
and J. Rovnyak \cite{dBR},
B.~S.~Pavlov \cite{P}, and others (also see
\cite{ABT},
\cite{Ku},
\cite{NV}, 
\cite{NK},
\cite{P2},
\cite{St68},
\cite{St98},  
 and references therein).

Based on  the functional model, we
obtain a refinement of Liv\v{s}ic's uniqueness result of
Theorem \ref{thmdiss} (see Theorem \ref{fund3}).

In Sec.~5 we perform initial spectral analysis of a dissipative triple
and obtain an analog of Krein's resolvent formula in the rank-one dissipative
extension  theory (cf.,  \cite{GMT}, \cite{GT}, \cite{Si}).

In Appendix A some spectral
 properties of the model symmetric operator are summarized.

Our exposition is based on the classical von Neumann extension theory. 
The proofs are straightforward and no knowledge
of the rigged Hilbert space approach 
\cite{ABT},
\cite{TS}
and/or  the boundary triplets theory 
 \cite{DM},
 \cite{MMog},
\cite{St68} is required. 

\section{The Liv\v{s}ic and the Weyl-Titchmarsh  functions }

 Liv\v{s}ic's definition  of a characteristic function
of a symmetric operator (see eq. \eqref{charsym})
has some ambiguity related to the
 choice of the deficiency elements $g_\pm$.
To avoid this ambiguity we proceed as follows.

Suppose that  $A$ is a self-adjoint extension of a symmetric operator
$\dot A$ with deficiency indices $(1,1)$. Let
$g_\pm$  be deficiency elements $g_\pm\in \Ker ((\dot A)^*\mp iI)$,
$\|g_+\|=1$. Assume, in addition, that
\begin{equation}\label{star}
g_+-g_-\in \Dom ( A).
\end{equation}

Introduce   the
Liv\v{s}ic function $s(\dot A, A)$
 associated with the pair $(\dot A, A)$ by
\begin{equation}\label{charf12}
s(z)=\frac{z-i}{z+i}\cdot \frac{(g_z, g_-)}{(g_z, g_+)}, \quad
z\in \bbC_+,
\end{equation}
where $0\ne g_z\in \Ker((\dot A)^*-zI )$ is   an arbitrary
(deficiency) element.

 The following result establishes a standard relationship between the
 Weyl-Titchmarsh and the    Liv\v{s}ic functions associated with the  pair
$(\dot A, A) $  \cite{ABT}, \cite{Br},
 \cite{BL58}, \cite{TS} 
(also see \cite{DM}, where a linear--fractional transformation between the
 Liv\v{s}ic characteristic function of a symmetric operator 
and the  Krein--Langer $Q$-function in the framework of boundary 
triplets theory was established).

\begin{theorem}\label{drob}
Denote by $M=M(\dot A, A)$ and by $s=s(\dot A, A)$
  the Weyl-Titchmarsh function and the    Liv\v{s}ic function  associated
with the pair $(\dot A, A)$, respectively.

 Then
\begin{equation}\label{blog}
s(z)=\frac{M(z)-i}{M(z)+i},\quad z\in \bbC_+.
\end{equation}

\end{theorem}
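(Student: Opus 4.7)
The strategy is to compute $M(z)\pm i$ in closed form, recognize each as a scalar multiple of an inner product against a natural deficiency element $g_z$, and divide.

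First I would fix the specific deficiency element
$$g_z := (A-iI)(A-zI)^{-1}g_+ = g_+ + (z-i)(A-zI)^{-1}g_+, \qquad z \in \bbC_+.$$
Because $(A-zI)^{-1}g_+ \in \Dom(A)$ and $(\dot A)^*$ coincides with $A$ on $\Dom(A)$ while sending $g_+$ to $ig_+$, a one-line check gives $(\dot A)^* g_z = z g_z$, so $g_z$ is a nonzero element of $\Ker((\dot A)^* - zI)$. Since the ratio in \eqref{charf12} is independent of the choice of $g_z$, this is no loss of generality. The normalization $g_+ - g_- \in \Dom(A)$ can next be turned into an explicit formula for $g_-$: the vector
$$g_+ - (A-iI)(A+iI)^{-1}g_+ = 2i(A+iI)^{-1}g_+$$
lies in $\Dom(A)$, and since $\Ker((\dot A)^* + iI)$ is one-dimensional this forces $g_- = (A-iI)(A+iI)^{-1}g_+$, the Cayley-transform image of $g_+$.

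The central algebraic observation is the elementary identity
$$(Az+I) \pm i(A-zI) = (z\pm i)(A\mp iI),$$
verified by direct expansion. Multiplying by $(A-zI)^{-1}$ on the right, applying to $g_+$, and pairing with $g_+$ gives
$$M(z)+i = (z+i)\bigl((A-iI)(A-zI)^{-1}g_+, g_+\bigr) = (z+i)(g_z, g_+),$$
$$M(z)-i = (z-i)\bigl((A+iI)(A-zI)^{-1}g_+, g_+\bigr).$$
By the formula for $g_-$ and the mutual commutation of resolvents of the self-adjoint operator $A$, the second inner product rewrites as $(g_z, g_-)$. Dividing the two displays yields $\frac{M(z)-i}{M(z)+i} = \frac{z-i}{z+i}\cdot\frac{(g_z,g_-)}{(g_z,g_+)} = s(z)$, which is \eqref{blog}. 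No step is genuinely hard; the only bookkeeping point is that $g_+ \notin \Dom(A)$, so symbols such as $(A\mp iI)(A-zI)^{-1}g_+$ must always be read with the resolvent applied first.
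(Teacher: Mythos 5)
Your proof is correct, but it takes a genuinely different route from the paper's. The paper first splits off the self-adjoint part, invokes Donoghue's model theorem (Theorem \ref{fund2}) to place $(\dot A, A)$ in the functional model $L^2(\bbR;d\mu)$, and then computes $s(z)$ as a ratio of Cauchy-type integrals $\int_\bbR \frac{d\mu(\lambda)}{(\lambda-z)(\lambda\mp i)}$, which it identifies with $(M(z)\mp i)/(z\mp i)$. You instead work abstractly: the identification $g_z=(A-iI)(A-zI)^{-1}g_+$, the forced formula $g_-=(A-iI)(A+iI)^{-1}g_+$ (uniqueness following because $\Dom(A)\cap\Ker((\dot A)^*+iI)=\{0\}$, as $A$ has no eigenvalue at $-i$), the polynomial identity $(Az+I)\pm i(A-zI)=(z\pm i)(A\mp iI)$, and unitarity of the Cayley transform to convert $((A+iI)(A-zI)^{-1}g_+,g_+)$ into $(g_z,g_-)$. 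Your argument is more self-contained — it needs neither the reduction to a prime operator nor the model realization, only the von Neumann formulas and resolvent algebra — while the paper's route makes the result essentially a one-line integral computation once the (substantial) model machinery of Theorem \ref{fund2} is in place. Both correctly use that the ratio in \eqref{charf12} is choice-independent because the deficiency space is one-dimensional, and both implicitly need $M(z)+i\neq 0$, which holds since $M$ is Herglotz; your identity $(g_z,g_+)=(M(z)+i)/(z+i)$ even makes the nonvanishing of the denominator explicit.
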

\begin{proof}
Let $\mu$ be the measure from the Herglotz-Nevanlinna representation \eqref{hernev}
of the
Weyl-Titchmarsh function $M$ associated with the pair
 $(\dot A,A)$.

By splitting  off the self-adjoint part of $\dot A$, we may assume
that $\dot A$ is a prime symmetric operator and then, in
accordance with  Theorem \ref{fund2}, one may also assume without
loss that $\dot A$ and $A$ are already chosen in their model
representation in the Hilbert space $L^2(\bbR; d\mu)$, so that the deficiency elements 
are given by
$$
g_z(\lambda)=\frac{1}{\lambda-z},\quad
\,\, \text{$\mu$-a.e. }, \quad \Im(z)\ne 0,
$$
and
$$
g_\pm(\lambda)=g_{\pm i}(\lambda)=\frac{1}{\lambda\mp i}, \quad
\,\, \text{$\mu$-a.e. }.
$$
Notice that $g_+-g_-\in \Dom (A) $ and hence
$$
s(z)=\frac{z-i}{z+i}\cdot \frac{(g_z, g_-)}{(g_z, g_+)}
=\frac{(z-i)\int_\R \frac{d\mu(\lambda)}{(\lambda-z)(\lambda-i)}}
{(z+i)\int_\R \frac{d\mu(\lambda)}{(\lambda-z)(\lambda+i)}}
=\frac{M(z)-i}{M(z)+i},
$$
completing the proof.
\end{proof}

As a corollary we obtain the following analog of Theorem
\ref{fund2}.

\begin{theorem}\label{fund22}

Suppose that $\dot A$ and $\dot B$
 are  closed, prime, densely defined  symmetric operators
with deficiency indices $(1,1)$. Assume, in addition, that
  $A$ and $B$ are some self-adjoint
extensions of $\dot A$ and $\dot B$ respectively.

Then

\begin{itemize}
\item[(i)] the pairs $(\dot A, A)$ and $(\dot B, B)$ are unitarily
equivalent if, and only if, the Liv\v{s}ic functions $s(\dot A, A)$ and
$s(\dot B, B)$ coincide;

\item[(ii)] the pair $(\dot A, A)$ is unitarily equivalent to the
model pair $(\dot \cB, \cB)$ in the Hilbert space $L^2(\bbR;d\mu)$,
where
 $\mu $ is  the representing measure  for the Weyl-Titchmarsh function
$M=M(\dot A, A)$.

In this case,
$$
M(z)=\frac1i\cdot\frac{s(z)+1}{s(z)-1}, \quad z\in \bbC_+,\quad
\text
{with} \quad s=s(\dot A, A).$$
\end{itemize}
\end{theorem}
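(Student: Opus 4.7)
The plan is to reduce the statement to the already-available Donoghue-type Theorem \ref{fund2} by exploiting the pointwise Cayley-type identity between the Liv\v{s}ic and Weyl--Titchmarsh functions that was just established in Theorem \ref{drob}. Concretely, I will argue that the correspondence $M\mapsto \frac{M-i}{M+i}$ is a bijection on the appropriate class of functions, so that $s(\dot A,A)$ and $M(\dot A,A)$ determine one another.

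For part (i), I would proceed as follows. Since $M=M(\dot A,A)$ is a Herglotz--Nevanlinna function, its values lie in $\bbC_+$ for $z\in\bbC_+$, and the map $w\mapsto \frac{w-i}{w+i}$ is the Cayley transform sending $\bbC_+$ biholomorphically onto $\bbD$. Combined with Theorem \ref{drob}, this gives the equivalence
\begin{equation*}
s(\dot A,A)(z)=s(\dot B,B)(z)\quad\Longleftrightarrow\quad M(\dot A,A)(z)=M(\dot B,B)(z),\qquad z\in\bbC_+.
\end{equation*}
Hence the Liv\v{s}ic functions coincide precisely when the Weyl--Titchmarsh functions do, and Theorem \ref{fund2}(i) delivers the unitary equivalence statement.

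For part (ii), the model statement is an immediate citation of Theorem \ref{fund2}(ii): the same measure $\mu$ produced by the Herglotz-Nevanlinna representation \eqref{hernev} of $M(\dot A,A)$ serves as the spectral measure of the model pair $(\dot \cB,\cB)$ in $L^2(\bbR;d\mu)$. The explicit formula $M(z)=\frac{1}{i}\cdot\frac{s(z)+1}{s(z)-1}$ is then obtained by solving \eqref{blog} for $M(z)$, that is, by inverting the Cayley transform.

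The ``hard part'' here is essentially bookkeeping: the real work sits in the already-proved Theorem \ref{drob} (which in turn relies on the functional model of Theorem \ref{fund2}) and in Donoghue's Theorem \ref{fund2} itself. The one point requiring a line of care is to confirm that $M(z)+i\neq 0$ and $s(z)-1\neq 0$ throughout $\bbC_+$, so that both the forward and inverse Möbius transformations are legitimate; this is automatic since $\Im M(z)>0$ for $z\in\bbC_+$ and $\abs{s(z)}<1$ on $\bbC_+$ by Theorem \ref{drob}.
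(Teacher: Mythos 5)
Your proposal is correct and matches the paper's intent exactly: the paper states this result as an immediate corollary of Theorem \ref{drob} combined with Donoghue's Theorem \ref{fund2}, relying on the same bijectivity of the Cayley transform $w\mapsto\frac{w-i}{w+i}$ between $\bbC_+$ and $\bbD$ that you invoke. Your added remark that $M(z)+i\neq 0$ and $s(z)-1\neq 0$ on $\bbC_+$ is a sensible (if routine) check that the paper leaves implicit.
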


\section{The Weyl-Titchmarsh function in the dissipative case}

The Weyl-Titchmarsh function can also be introduced
 in a more general context of
dissipative extensions of a symmetric operator. We refer to \cite{AHS}
 (also see \cite{ABT})
where the concept of the Weyl-Titchmarsh function for
 bounded non-self-adjoint operators has been discussed.

To be more specific, suppose that $\dot A$ is a  densely
defined, closed, symmetric operator with deficiency indices $(1,1)$ and
$\widehat A$ is  its  maximal dissipative
extension, that is,
$$
\dot A \subset\widehat A\subset (\dot A)^*$$ and
$$
\Im(\widehat Af,f)\ge 0, \quad f\in \Dom(\widehat A).
$$

Suppose that $g_+$ is a deficiency element $g_+\in\Ker ((\dot
A)^*-iI)$
 satisfying the normalization condition $||g_+||=1$.

We introduce the Weyl-Titchmarsh function $\cM=\cM(\dot A, \widehat A)$ associated with the
pair $(\dot A, \widehat A)$ by
$$
\cM(z)=\left (((\widehat A)^*z+I)((\widehat A)^*-zI)^{-1}g_+,g_+\right ), \quad
z\in \bbC_+.
$$

We remark that in contradistinction to the self-adjoint case, the
dependence of $\cM$ on the first argument, the symmetric
operator $\dot A$, can be suppressed for, under our
assumptions,
$$
\dot A=\widehat A|_{\dom (A)\,\cap \,\dom ((\dot A)^*)}.
$$
In other words,  the symmetric operator $\dot A$ is uniquely determined
by the  extension $\widehat A$, provided that $\widehat A$
 is not self-adjoint.

In order to establish a relationship between the Liv\v{s}ic function associated
with the pair $(\dot A, A) $ and
 the Weyl-Titchmarsh function of the dissipative extension $\widehat A$
 of
$\dot A$  it is convenient to use an analog of
the von Neumann parameterization of $\Dom (\widehat A)$.

To set up the notation, introduce the following hypothesis.

\begin{hypothesis}\label{setup} Suppose 
that $\widehat A\ne(\widehat A)^*$  is  a maximal 
dissipative extension of  a symmetric operator $\dot A$
 with deficiency indices $(1,1)$. Assume, in addition, that
$A$ is a  self-adjoint extension of $\dot A$. Suppose,  that the
deficiency elements $g_\pm\in \Ker ((\dot A)^*\mp iI)$ are
normalized, $\|g_\pm\|=1$, and chosen in such a way that
\begin{equation}\label{ddoomm14}g_+- g_-\in \dom ( A)\,\,\,\text{and}\,\,\,
g_+-\kappa g_-\in \dom (\widehat A)\,\,\,\text{for some }
\,\,\,|\kappa|<1.
\end{equation}
\end{hypothesis}

The following result is an extension  of Theorem \ref{drob} to the
dissipative case.

\begin{theorem}\label{start}
 Assume Hypothesis \ref{setup}. Denote by  $s=s(\dot A, A)$  the Liv\v{s}ic function  associated with the
pair $(\dot A, A)$
  and by $\cM=\cM( \widehat A)$
 the Weyl--Titchmarsh function of the dissipative operator $\widehat A$.

Then
\begin{equation}\label{sviaM}
\overline{\kappa}s(z) =\frac{\cM(z)-i}{\cM(z)+i},\quad z\in \bbC_+.
\end{equation}

\end{theorem}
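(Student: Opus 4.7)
The plan is to work in the explicit functional model from Theorem \ref{fund22}(ii), where $(\dot A, A)$ is realized as $(\dot\cB, \cB)$ in $L^2(\bbR, d\mu)$ with $\cB$ multiplication by $\lambda$, and the deficiency elements are $g_\pm(\lambda) = (\lambda\mp i)^{-1}$. The first step is to identify $\Dom((\widehat A)^*)$ in the von Neumann parameterization. Since $(\widehat A)^*$ is a one-dimensional extension of $\dot A$ inside $(\dot A)^*$, there is a unique $\kappa^*\in\bbC$ with $\Dom((\widehat A)^*) = \Dom(\dot A) + \linspan\{g_+-\kappa^* g_-\}$; testing the adjoint relation $(\widehat A u, v) = (u, (\widehat A)^* v)$ on $u = g_+-\kappa g_-$ and $v=g_+-\kappa^* g_-$ (using $(\dot A)^* g_\pm = \pm ig_\pm$) collapses to $\overline{\kappa^*}\kappa = 1$, giving $\kappa^*=1/\overline{\kappa}$. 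Equivalently, $\overline{\kappa}\,g_+-g_-\in\Dom((\widehat A)^*)$.

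Next, I solve $((\widehat A)^*-zI)h_z = g_+$ for $z\in\bbC_+$ by writing $h_z = h_z^{(0)}+\alpha_z g_+ + \beta_z g_-$ with $h_z^{(0)}\in\Dom(\dot A)$. Using $(\widehat A)^*\subset(\dot A)^*$, the equation reduces to the explicit identity
\begin{equation*}
(\lambda-z)\,h_z^{(0)}(\lambda) = [1-(i-z)\alpha_z]\,g_+(\lambda) + (i+z)\beta_z\, g_-(\lambda),
\end{equation*}
while $\alpha_z, \beta_z$ are pinned down by two linear conditions: the domain constraint $\alpha_z+\overline{\kappa}\beta_z=0$ and the integral condition $\int h_z^{(0)}\,d\mu = 0$ characterizing $\Dom(\dot A)$. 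Substituting the definition $s(z) = \frac{z-i}{z+i}\cdot\frac{(g_z,g_-)}{(g_z,g_+)}$ into the $L^2(d\mu)$ integrals that appear, a short algebraic manipulation yields
\begin{equation*}
\alpha_z = \frac{\overline{\kappa}\,s(z)}{(z-i)(1-\overline{\kappa}\, s(z))}.
\end{equation*}

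Finally, rather than integrating $h_z$ against $g_+$ directly, I take the inner product of $((\widehat A)^*-zI)h_z = g_+$ with $g_+$ and apply the identity $(\dot A v, g_+) = -i(v, g_+)$ for $v\in\Dom(\dot A)$ (which follows from $(\dot A)^* g_+=ig_+$), combined with the von Neumann decomposition of $h_z$; this yields the compact expression $(h_z, g_+) = (2i\alpha_z - 1)/(z+i)$. Inserting it into $\cM(z) = z + (z^2+1)(h_z, g_+)$, which itself follows from the operator identity $(\widehat A)^*z+I = z((\widehat A)^*-zI)+(z^2+1)I$, and simplifying gives $\cM(z) = i(1+\overline{\kappa}\, s(z))/(1-\overline{\kappa}\, s(z))$, whose M\"obius inverse is precisely \eqref{sviaM}. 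The main obstacle is the first step — correctly identifying $\Dom((\widehat A)^*)$ via $\kappa^* = 1/\overline{\kappa}$ and then extracting $\alpha_z$ from the $2\times 2$ system; after that, the computation is routine algebra.
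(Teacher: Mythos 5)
Your argument is correct, but it takes a genuinely different route from the paper's. The paper stays coordinate-free: it first establishes the identity $\kappa g_- = (\widehat A - iI)(\widehat A + iI)^{-1}g_+$ (the left side must be a multiple $\alpha g_-$, and $\alpha g_- - g_+ = -2i(\widehat A+iI)^{-1}g_+ \in \Dom(\widehat A)$ forces $\alpha=\kappa$), then takes $g_z=((\widehat A)^*-iI)((\widehat A)^*-zI)^{-1}g_+$ as the deficiency element in the definition of $s$, moves the Cayley transform across the inner product, and reduces everything to $\left(((\widehat A)^*-zI)^{-1}g_+, g_+\right)$, which is precisely the resolvent expression in $\cM$. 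You instead pass to the functional model, identify the von Neumann parameter of $(\widehat A)^*$ as $1/\overline{\kappa}$ via the boundary form (which does collapse to $2i(1-\kappa\overline{\kappa^*})$ since the cross terms in $(g_+,g_-)$ cancel), and solve the resolvent equation for $(\widehat A)^*$ explicitly. I verified your two linear conditions, the value of $\alpha_z$, the identity $(h_z,g_+)=(2i\alpha_z-1)/(z+i)$, and the final $\cM=i(1+\overline{\kappa}s)/(1-\overline{\kappa}s)$; all are correct, including the degenerate case $\kappa=0$, where $\alpha_z=0$ and $\cM\equiv i$. The trade-off: your route needs the reduction to a prime operator and the model realization of Theorem \ref{fund22}(ii), plus (implicitly) the relation $s=(M-i)/(M+i)$ of Theorem \ref{drob}, none of which the paper's proof of this particular theorem invokes; in exchange you obtain two useful byproducts --- the explicit von Neumann parameter $1/\overline{\kappa}$ of the adjoint extension, and an explicit model resolvent formula for $(\widehat A)^*$ that is structurally the same computation as the Krein-type formula of Section 5. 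Two cosmetic points to add: state explicitly that you may assume $\dot A$ prime (split off the self-adjoint part, as the paper does in proving Theorem \ref{drob}), and note that $\bbC_+\subset\rho((\widehat A)^*)$ because $\widehat A$ is maximal dissipative, so that $h_z$ exists for every $z\in\bbC_+$.
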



\begin{proof} Since the
 extension $\widehat A$ of $\dot A$
is a restriction of $(\dot A)^*$ on $\Dom (\widehat A)$, one gets
that
\begin{equation}\label{sososo}
(\widehat A -iI)(\widehat A-zI)^{-1}g_+\in \Ker ((\dot A)^*-zI),
\quad z\in  \rho (\widehat A),
\end{equation}
where $\rho(\widehat A)$ denotes the resolvent set of $\widehat A$.

Indeed,
$$
(\widehat A -iI)(\widehat A-zI)^{-1}g_+=g_++(z-i)(\widehat
A-zI)^{-1}g_+
$$
and hence
\begin{align*}
((\dot A)^*-zI)&(\widehat A -iI)(\widehat A-zI)^{-1}g_+ = ((\dot
A)^*-zI)g_+\\&+(z-i)((\dot A)^*-zI)(\widehat A-zI)^{-1}g_+
\\ &=
(i-z)g_++(z-i)(\widehat A-zI)(\widehat A-zI)^{-1}g_+=0
\end{align*}
which proves \eqref{sososo}.

From \eqref{sososo} one obtains that
 $(\widehat A -iI)(\widehat A +iI)^{-1} g_+\in \Ker
((\dot A)^*+iI)$ and hence
 $
(\widehat A -iI)(\widehat A+iI)^{-1} g_+=\alpha g_- $
for some $ \alpha \in \bbC.
$

On the other hand,
\begin{equation}\label{sasa}
\alpha g_-=(\widehat A -iI)(\widehat A+iI)^{-1}
g_+=g_+-2i(\widehat A+iI)^{-1}g_+\
\end{equation}
and therefore
$$
\alpha g_--g_+=-2i(\widehat A+iI)^{-1}g_+\in \Dom (\widehat A).
$$
Taking into account the characterization \eqref{ddoomm14}
 of $ \Dom(\widehat A)$, one obtains that
$\alpha =\kappa$ and therefore
\begin{equation}\label{didi}
\kappa g_-=(\widehat A -iI)(\widehat A+iI)^{-1} g_+
,\end{equation} as it follows from \eqref{sasa}.

Introducing the elements
\begin{equation}\label{dididi}
g_z=(\widehat A^*-iI)(\widehat A^*-zI)^{-1}g_+, \quad z\in
\rho(\widehat A^*),
\end{equation}
and taking into account that the adjoint operator $(\widehat  A)^*$ is also
 a quasi-self-adjoint extension of $\dot A$, one concludes that
$$
g_z\in \Ker ((\dot A)^*-zI), \quad z\in \rho (\widehat  A),
$$
which shows that the Liv\v{s}ic function $s=s(\dot A , A)$ admits the representation
(see definition \eqref{charf12})
$$
s(z)=\frac{z-i}{z+i}\cdot \frac{(g_z,g_-)}{(g_z,g_+)}, \quad z\in
\bbC_+.
$$
Therefore, in view of \eqref{didi} and \eqref{dididi}, one computes
\begin{align*}
\overline{\kappa}\,
s(z)&=\frac{z-i}{z+i}\cdot\frac{(g_z,\kappa g_-)}
{(g_z,g_+)}\\
&= \frac{z-i}{z+i}\cdot \frac{((\widehat A^*-iI)(\widehat
A^*-zI)^{-1}g_+, (\widehat A -iI)(\widehat A+iI)^{-1}g_+)} {(
(\widehat A^*-iI)(\widehat A^*-zI)^{-1}g_+         , g_+)}
\\&=
 \frac{z-i}{z+i}\cdot \frac{((\widehat A^*+iI)(\widehat A^*-zI)^{-1}g_+,
g_+)} {( (\widehat A^*-iI)(\widehat A^*-zI)^{-1}g_+         ,
g_+)}
\\&=
\frac{z-i}{z+i}\cdot \frac{(1+(z+i)((\widehat A^*-zI)^{-1}g_+,
g_+))} {(1+(z-i)((\widehat A^*-zI)^{-1}g_+, g_+))}
\\&=
  \frac{z-i+(z^2+1)((\widehat A^*-zI)^{-1}g_+,
g_+)} {z+i+(z^2+1)((\widehat A^*-zI)^{-1}g_+, g_+)}
\\&=
 \frac{\cM(z)-i}{\cM(z)+i}\,,\quad z\in
\rho(\widehat A^*),
\end{align*}
proving the claim.

\end{proof}
\begin{remark} Combining Theorems \ref{drob} and  \ref{start}, it is easy
to see that under Hypothesis \ref{setup} the Weyl--Titchmarsh functions
$M=M(\dot A,  A)$ and $\cM(\widehat A)$ are related as follows
$$
\frac{\cM-i}{\cM+i}=\overline{\kappa} \cdot \frac{M-i}{M+i}.
$$
\end{remark}
Our next result shows that the
Weyl--Titchmarsh function of a dissipative operator admits the Herglotz-Nevanlinna
representation
with a (locally)   absolutely continuous representing measure.

\begin{corollary}\label{realiza} Assume Hypothesis \ref{setup}.
Then    the Weyl--Titchmarsh function $\cM=\cM( \widehat A)$   of
the dissipative operator $\widehat A$ is an analytic function
mapping the upper half-plane into the disk of radius
$\frac{2|\kappa|}{1-|\kappa|^2}$ centered at the point
$\left (0,\frac{1+|\kappa|^2}{1-|\kappa|^2} \right )$ of the
$xy$-plane.

Moreover, the function  $\cM$ admits the Herglotz-Nevanlinna representation
\begin{equation}\label{vseaaa}
\cM(z)=\int_\bbR \left
(\frac{1}{\lambda-z}-\frac{\lambda}{1+\lambda^2}\right )
f(\lambda)d\lambda, \quad z\in \bbC_+,
\end{equation}
where
\begin{equation}\label{rnest}
\frac1\pi \cdot \frac{1-|\kappa|}{1+|\kappa|}\le f(\lambda)
\le\frac1\pi \cdot\frac{1+|\kappa|}{1-|\kappa|} \quad \text{a.e.}\,
\end{equation}
and
\begin{equation}\label{vsea1aa}
\int_\R \frac{f(\lambda)}{1+\lambda^2}d\lambda=1.
\end{equation}

\end{corollary}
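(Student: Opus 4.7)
The plan is to invert the Cayley-type identity \eqref{sviaM} of Theorem \ref{start} and then read off everything from the boundary behavior of the resulting expression. Solving \eqref{sviaM} for $\cM$ gives
\[
\cM(z) \;=\; i\cdot\frac{1+\overline{\kappa}\,s(z)}{1-\overline{\kappa}\,s(z)},\qquad z\in\bbC_+,
\]
and since the Liv\v{s}ic function $s$ maps $\bbC_+$ into the open unit disk, the image of $\bbC_+$ under $\cM$ is contained in the image of $\{w\in\bbC:|w|\le|\kappa|\}$ under the M\"obius map $\phi(w)=i(1+w)/(1-w)$. The identity $\phi(\overline w)=-\overline{\phi(w)}$ shows that the image of $\{|w|=|\kappa|\}$ is symmetric about the imaginary axis, so its center lies there; the two points $\phi(\pm|\kappa|)$, both real multiples of $i$, are the diameter endpoints of the image circle along the imaginary axis. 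A short calculation with these two values recovers exactly the center $(0,(1+|\kappa|^2)/(1-|\kappa|^2))$ and radius $2|\kappa|/(1-|\kappa|^2)$ claimed in the statement.

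From the disk description I would extract the two-sided bound
\[
\frac{1-|\kappa|}{1+|\kappa|}\;\le\;\Im\cM(z)\;\le\;\frac{1+|\kappa|}{1-|\kappa|},\qquad z\in\bbC_+,
\]
obtained as the distance from the center of the disk to the real axis, minus/plus the radius. In particular $\cM$ is a \emph{bounded} Herglotz function. Boundedness of $|\cM|$ on $\bbC_+$ forces the linear coefficient in the general Herglotz-Nevanlinna representation of $\cM$ to vanish, while boundedness of $\Im\cM$ implies, via Fatou's theorem and the Stieltjes inversion formula, that the representing measure is purely absolutely continuous with density $f(\lambda)=\pi^{-1}\Im\cM(\lambda+i0)$. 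Dividing the displayed inequality by $\pi$ then yields the pointwise bounds \eqref{rnest}.

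Finally, to pin down the absence of an additive real constant in \eqref{vseaaa} and the normalization \eqref{vsea1aa}, I would evaluate at $z=i$: the Liv\v{s}ic characterization \eqref{vsea0} gives $s(i)=0$, so the inversion formula above yields $\cM(i)=i$, and matching this value against the Herglotz-Nevanlinna representation uniquely forces the real constant to be $0$ and $\int_\bbR f(\lambda)/(1+\lambda^2)\,d\lambda=1$. The only genuinely external input is the standard Fatou/Stieltjes inversion step identifying the representing measure of a bounded Herglotz function as absolutely continuous with bounded density; this is where I expect the only real (though routine) work to be, since every other step reduces to elementary M\"obius geometry applied to the already-proved identity \eqref{sviaM}.
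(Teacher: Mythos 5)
Your proposal is correct and follows essentially the same route as the paper: both derive the disk containment by applying the Cayley/M\"obius map to the identity \eqref{sviaM} together with the contractivity of the Liv\v{s}ic function, extract the two-sided bound on $\Im\cM$, invoke Fatou's lemma to get an absolutely continuous representing measure with density $\pi^{-1}\Im\cM(\lambda+i0)$, and use $\cM(i)=i$ for the normalization \eqref{vsea1aa}. The only cosmetic difference is that you compute the image disk explicitly via its diameter endpoints on the imaginary axis, whereas the paper simply asserts which disk the Cayley transform carries onto $|\kappa|\bbD$.
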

\begin{proof}
Since the Cayley transform
$$z\mapsto \frac{z-i}{z+i}$$ maps the disk
$\frac{2|\kappa|}{1-|\kappa|^2}\bbD+i\frac{1+|\kappa|^2}{1-|\kappa|^2}$
onto the disk $|\kappa|\bbD$ and the Liv\v{s}ic function
$s(\dot A, A)$ is contractive in $\bbC_+$,
  from \eqref{sviaM} follows that
$$
\text{Range }( \cM)\subset \frac{2|\kappa|}
{1-|\kappa|^2}\bbD+i\frac{1+|\kappa|^2}{1-|\kappa|^2}.
$$
In particular,
\begin{equation}\label{rnestt}
\frac{1-|\kappa|}{1+|\kappa|}\le\Im \,\cM(z)\le
\frac{1+|\kappa|}{1-|\kappa|} , \quad z\in \bbC_+.
\end{equation}

Since $\cM(i)=i$ and $\cM$ has a bounded imaginary part in the upper
half-plane, using Fatou's Lemma, one proves
 the Herglotz-Nevanlinna representation
$$
\cM(z)=\int_\bbR \left
(\frac{1}{\lambda-z}-\frac{\lambda}{1+\lambda^2}\right )
d\mu(\lambda), \quad z\in \bbC_+,
$$
where $\mu$ is a (locally) absolutely
 continuous measure with the Radon-Nikodym density
$$
f(\lambda)=\frac1\pi\lim_{\varepsilon \downarrow 0}\Im \,\cM(\lambda+i\varepsilon),
 \quad \text{a.e.}\,,
$$
which proves  \eqref{rnest}, using  \eqref{rnestt}. Now 
  \eqref{vsea1aa} follows from the observation that $\cM(i)=i$.
\end{proof}

\section{The characteristic function and the functional model of a dissipative operator}

Under Hypothesis \ref{setup}, we  introduce  the characteristic
function $S=S( \dot A, \widehat A, A)$
associated with the triple of operators $( \dot A, \widehat A, A)$
as the M\"obius transformation
\begin{equation}\label{ch1}
S(z)=\frac{s(z)-\kappa} {\overline{ \kappa }\,s(z)-1}, \quad z\in \bbC_+,
\end{equation}
of the Liv\v{s}ic function $s=s(\dot A, A)$ associated with the pair
$(\dot A, A)$.

We remark that given  a triple  $( \dot A, \widehat A, A)$, one can always find a
basis $g_\pm$ in the deficiency subspace
 $\Ker ((\dot A)^*-iI)\dot +\Ker ((\dot A)^*+iI)$,
$$\|g_\pm\|=1, \quad g_\pm\in ((\dot A)^*\mp iI),
$$ such that
$$
g_+-g_-\in \Dom (A)
\quad \text{
and}\quad
g_+-\kappa g_-\in \Dom (\widehat A),
$$
and then, in this case,
\begin{equation}\label{assa}
\kappa =S( \dot A, \widehat A, A)(i).
\end{equation}

Our next goal is to introduce a {\it functional model} of a prime
 dissipative triple\footnote{We call a triple $(\dot A, \widehat A, A)$
 a prime triple if $\dot A$ is a prime symmetric operator.} parameterized by the characteristic
function.

Given a contractive analytic map $S$,
\begin{equation}\label{chchch}
S(z)=\frac{s(z)-\kappa} {\overline{ \kappa }\,s(z)-1}, \quad z\in \bbC_+,
\end{equation}
where $|\kappa|<1$ and $s$ is 
an  analytic, contractive function in $\bbC_+$ 
satisfying the Liv\v{s}ic criterion \eqref{vsea0}, introduce the function
$$
M(z)=\frac1i\cdot\frac{s(z)+1}{s(z)-1},\quad z\in \bbC_+,
$$
so that
$$
M(z)=\int_\bbR \left
(\frac{1}{\lambda-z}-\frac{\lambda}{1+\lambda^2}\right )
d\mu(\lambda), \quad z\in \bbC_+,
$$
for some infinite Borel measure with
$$
\int_\bbR\frac{d\mu(\lambda)}{1+\lambda^2}=1.
$$

In the Hilbert space $L^2(\bbR;d\mu)$ introduce
  the multiplication (self-adjoint)
operator by the  independent variable $\cB$
 on
\begin{equation}\label{nacha1}
\Dom(\cB)=\left \{f\in \,L^2(\bbR;d\mu) \,\bigg | \, \int_\bbR
\lambda^2 | f(\lambda)|^2d \mu(\lambda)<\infty \right \},
\end{equation} denote by  $\dot \cB$  its
 restriction
on
\begin{equation}\label{nacha2}
\Dom(\dot \cB)=\left \{f\in \Dom(\cB)\, \bigg | \, \int_\bbR
f(\lambda)d \mu(\lambda) =0\right \},
\end{equation}
and let
 $\widehat \cB$ be   the dissipative restriction of the operator  $(\dot \cB)^*$
 on
\begin{equation}\label{nacha3}
\Dom(\widehat \cB)=\dom (\dot \cB)\dot +\linspan\left
\{\,\frac{1}{\cdot -i}- S(i)\frac{1}{\cdot +i}\right \}.
\end{equation}

We will refer to the triple  $(\dot \cB,   \widehat \cB,\cB)$ as
{\it  the model
 triple } in the Hilbert space $L^2(\bbR;d\mu)$.

Our next result shows that a triple $(\dot A,A,\widehat A)$
with the characteristic function $S$
is unitarily equivalent to the model triple
$(\dot \cB,   \widehat \cB,\cB)$
in the Hilbert space $L^2(\bbR;d\mu)$
whenever the underlying symmetric operator $\dot A$ is prime.

The triple $(\dot \cB,   \widehat \cB,\cB)$ will therefore be called {\it the functional model}
for $(\dot A,A,\widehat A)$.

\begin{theorem}\label{fund3}

Suppose that $\dot A$ and $\dot B$
 are prime, closed, densely defined  symmetric operators
with deficiency indices $(1,1)$. Assume, in addition, that
  $A$ and $B$ are some self-adjoint
extensions of $\dot A$ and $\dot B$ and that
$\widehat A$ and $\widehat B$ are maximal dissipative
extensions of $\dot A$ and $\dot B$, respectively
($\widehat A\ne (\widehat A)^*$, 
$\widehat B\ne (\widehat B)^*$).

Then
\begin{itemize}
\item[(i)] the triples  $(\dot A,\widehat A, A)$ and $(\dot
B,\widehat B,  B)$
 are unitarily equivalent\footnote{We say that triples  of operators
$(\dot A, \widehat A, A)$ and $(\dot B,\widehat B,  B)$
 in Hilbert spaces $\cH_A$ and $\cH_B$
are unitarily equivalent if there is
a unitary map $\cU$ from $\cH_A$ onto $\cH_B$ such that
$\dot B=\cU\dot A\cU^{-1}$, $\widehat  B=\cU\widehat  A\cU^{-1}$, and  $ B=\cU A\cU^{-1}$.}
if, and only if, the characteristic functions   $S_A=S(\dot A,\widehat  A, A)$
and $S_B=S(\dot B, \widehat B, B)$ of the triples coincide;

\item[(ii)] the triple  $(\dot A,\widehat A,  A)$ is unitarily
equivalent to the model triple $(\dot \cB, \widehat\cB,  \cB )$ in
the Hilbert space $L^2(\bbR;d\mu)$, where $\mu$ is the
representing measure for the Weyl--Titchmarsh function $M=M(\dot A,
A)$ associated with the pair $(\dot A, A)$.

\end{itemize}

\end{theorem}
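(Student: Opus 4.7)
The plan is to establish (ii) first and then deduce (i) as a corollary. For (ii), I begin by invoking Theorem \ref{fund22}(ii) applied to the pair $(\dot A, A)$: this produces a unitary map $\cU$ from the underlying Hilbert space onto $L^2(\bbR; d\mu)$ intertwining $\dot A$ with $\dot \cB$ and $A$ with $\cB$, where $\mu$ is the representing measure of the Weyl--Titchmarsh function $M(\dot A, A)$. The remaining task is to show that the same $\cU$ intertwines $\widehat A$ with $\widehat \cB$.

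The crux is to identify the images under $\cU$ of the deficiency basis $\{g_\pm\}$ from Hypothesis \ref{setup}. A direct computation shows that in $L^2(\bbR; d\mu)$ the functions $\widetilde g_\pm(\lambda) = 1/(\lambda \mp i)$ are normalized deficiency elements of $\dot \cB$ (the identity $\int d\mu/(1+\lambda^2)=1$ is precisely $\|\widetilde g_\pm\|=1$) and satisfy $\widetilde g_+ - \widetilde g_- = 2i/(\lambda^2+1) \in \Dom(\cB)$, so they fulfill the analog of \eqref{star}. Since the pair $g_+$ is cyclic for $A$ (by primeness), $\cU$ is determined by the requirement $\cU g_+ = e^{i\phi} \widetilde g_+$; absorbing $e^{i\phi}$ into $\cU$ one may assume $\cU g_+ = \widetilde g_+$, and then the condition $g_+ - g_- \in \Dom(A)$, transported by $\cU$, forces $\cU g_- = \widetilde g_-$ as well.

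With this alignment in place, applying $\cU$ to $g_+ - \kappa g_- \in \Dom(\widehat A)$ yields $\widetilde g_+ - \kappa \widetilde g_-$, which by \eqref{assa} (writing $\kappa = S_A(i)$) is exactly the generator of $\Dom(\widehat \cB)$ modulo $\Dom(\dot \cB)$ in the definition \eqref{nacha3}. Hence the von-Neumann-type decomposition
\[
\Dom(\widehat A) = \Dom(\dot A) \dot+ \linspan\{g_+ - \kappa g_-\}
\]
is mapped by $\cU$ onto the decomposition of $\Dom(\widehat \cB)$ from \eqref{nacha3}. Since $\widehat A$ and $\widehat \cB$ are restrictions of $(\dot A)^*$ and $(\dot \cB)^*$ respectively, and $\cU$ intertwines these adjoints, the equality $\cU \widehat A \cU^{-1} = \widehat \cB$ follows, proving (ii). For (i), the ``only if'' direction is immediate: a unitary equivalence of triples transports any basis satisfying Hypothesis \ref{setup} for $(\dot A, \widehat A, A)$ to one for $(\dot B, \widehat B, B)$, and both $s$ and $\kappa$ (and hence $S$) are invariantly defined, the common phase ambiguity cancelling in both the Liv\v{s}ic ratio and the coefficient in \eqref{parpar}. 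For the converse, $S_A = S_B$ yields $\kappa_A = S_A(i) = S_B(i) = \kappa_B$ (evaluating \eqref{ch1} at $z=i$ and using $s(i)=0$ from \eqref{vsea0} gives $S(i)=\kappa$); inverting the Möbius transformation \eqref{ch1} gives $s_A = s_B$, Theorem \ref{drob} then gives $M_A = M_B$, so the representing measures agree, and composing the two intertwiners from (ii) yields the desired unitary equivalence.

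The principal obstacle I anticipate is the alignment argument underlying (ii): namely, that the freedom in the choice of $\cU$ afforded by Theorem \ref{fund22}(ii) is exactly a single unimodular phase, matching the simultaneous phase ambiguity of the normalized deficiency pair $(g_+, g_-)$ compatible with \eqref{star}. Once this alignment is secured, the verification that $\cU$ also intertwines the dissipative extensions reduces to matching the one-dimensional complement of $\Dom(\dot A)$ in $\Dom(\widehat A)$, which is handled cleanly by \eqref{assa} and \eqref{nacha3}.
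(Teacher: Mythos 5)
Your proof is correct and rests on essentially the same mechanism as the paper's: reduction to the self-adjoint pair via Theorem \ref{fund22} (equivalently Theorem \ref{fund2}), followed by matching the one-dimensional complement of $\Dom(\dot A)$ in $\Dom(\widehat A)$ through the invariant $\kappa=S(i)$ of \eqref{assa}. The only difference is organizational --- the paper proves (i) first (transporting an admissible deficiency basis by the Donoghue unitary and using $\kappa=S_A(i)=S_B(i)$ to match the dissipative domains) and then obtains (ii) by computing that the model triple has characteristic function $S_A$, whereas you build the model intertwiner first and derive (i) from it; your explicit phase-alignment $\cU g_\pm=\widetilde g_\pm$ is a valid, slightly more detailed rendering of the same step.
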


\begin{proof} (i).
 Since
$
S_A=S_B
$
and $$
s( \dot A, A)=\frac{S_A-S_A(i)}{\overline{S_A(i)}
S_A-1}=\frac{S_B-S_B(i)}{\overline{S_B(i)} S_B-1}=s( \dot B, B),
$$
one concludes that the Liv\v{s}ic functions $s( \dot A, A)$  and $
s(\dot B, B)$ coincide,

By Theorem \ref{fund2} (i), there exists  a unitary map $\cU$ such
that
\begin{equation}\label{use}
\cU \dot A\cU^*=\dot B \quad \text{ and }\quad \cU A\cU^*= B.
\end{equation}

Let $g_\pm\in \Ker((\dot A)^*\mp i I)$, $\|g_\pm\|=1$, such that
$$
g_+-g_-\in\Dom(A).
$$
Set
$$
f_\pm=\cU g_\pm.
$$
Equalities \eqref{use} yield $f_\pm\in \Ker((\dot B)^*\mp i I)$,
$\|f_\pm\|=1$, and
$$
f_+-f_-\in\Dom(B).
$$
In this case, since $
\kappa=S_A(i)=S_B(i)
$,
the membership $$g_+-\kappa g_-\in \Dom(\widehat
A)$$ means that  $f_+-\kappa f_-\in \Dom(\widehat
B)$. Thus,
$$
\cU\Dom(\widehat A)=\Dom(\widehat B),
$$
and hence
$$\cU\,\widehat A\cU^*=\widehat B,$$ proving that the triples
$(\dot A, \widehat A, A)$ and $(\dot B, \widehat B, B)$ are
unitarily equivalent.

(ii). By Theorem \ref{fund2} (ii), the Weyl--Titchmarsh function
$M(\dot \cB, \cB)$ coincides with
 $M(\dot A, A)$ and hence the corresponding Liv\v{s}ic functions coincide, that is,
$
s(\dot A, A)=s(\dot \cB, \cB).
$

Since the deficiency elements
$$
f_\pm(\lambda)=\frac{1}{\lambda\mp i}
$$ are normalized by one,
$
f_+-f_-\in \Dom (\cB)
$,
and, by hypothesis,
$$
f_+-S(i)f_-\in \Dom (\widehat \cB)
,$$
one computes that
$$
S_B=\frac{s(\dot \cB, \cB)-S_A(i)}{\overline{S_A(i)}s(\dot \cB, \cB)-1}
=S_A.$$ Therefore, the triples $(\dot A, \widehat
A, A)$ and $(\dot \cB, \widehat \cB, \cB)$ are unitarily
equivalent by the first part of the proof.

The proof is complete.
\end{proof}

\begin{remark} We remark that
the Weyl-Titchmarsh function
$M=M(\dot A, A)$
 can be recovered from the characteristic function $S_A$ of the triple
$(\dot A, \widehat A, A)$ by  the  equation
$$
\quad M(z)=\frac1i\cdot \frac{s(z)+1}{s(z)-1}\quad \text{with}
\quad
s(z)=\frac{S_A(z)-S_A(i)}{\overline{S_A(i)} S_A(z)-1}, \quad z\in \bbC_+.
$$

\end{remark}

We conclude this section by showing  that the characteristic
function $S$ associated with the triple
$(\dot A,\widehat A, A)$ and the Weyl-Titchmarsh function $\cM(
\widehat A)$ of the dissipative operator $\widehat A$
  are related by  a linear transformation.


\begin{theorem}\label{kolo}
Let
 $\cM$ be the Weyl-Titchmarsh function $\cM( \widehat A)$ of a
 dissipative operator $\widehat A$
and  $S=S(\dot A,\widehat A, A)$  the characteristic function  associated
with the triple  $(\dot A, \widehat A, A)$.

Then
 \begin{equation}\label{kilo}
\overline{ \kappa }S(z)=
\frac{|\kappa|^2-1}{2i}\cM(z)+\frac{|\kappa|^2+1}{2},
\text{ with }
\kappa=S(i), \quad z\in \bbC_+.\end{equation}
\end{theorem}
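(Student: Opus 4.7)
The plan is to treat this as a direct algebraic consequence of the two Möbius-type identities already established: formula \eqref{ch1}, which defines $S$ in terms of the Liv\v{s}ic function $s$ and the von Neumann parameter $\kappa$, and identity \eqref{sviaM} from Theorem \ref{start}, which relates $\cM$ to $\overline{\kappa}\,s$ via the Cayley transform. My strategy is to eliminate $s(z)$ from these two relations and observe that the resulting dependence of $\overline{\kappa}\,S(z)$ on $\cM(z)$ degenerates, perhaps unexpectedly, from a generic linear-fractional expression into a genuine affine (linear) one.

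First I would assume $\kappa\neq 0$ and solve \eqref{sviaM} for $s(z)$, namely
\[
s(z)=\frac{1}{\overline{\kappa}}\cdot\frac{\cM(z)-i}{\cM(z)+i},\qquad z\in\bbC_+,
\]
and substitute this into the defining formula \eqref{ch1} for $S(z)$. Clearing denominators in both numerator and denominator (each picks up a common factor of $\cM(z)+i$), the denominator simplifies to a constant multiple of $i$ because the $\cM$-dependent terms cancel: $(\cM-i)-(\cM+i)=-2i$. After this cancellation I would read off
\[
\overline{\kappa}\,S(z)=\frac{(|\kappa|^{2}-1)\cM(z)+i(1+|\kappa|^{2})}{2i},
\]
which is exactly \eqref{kilo} once one distributes the $2i$ in the denominator.

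Finally I would dispose of the degenerate case $\kappa=0$ separately, although it requires only a quick check: when $\kappa=0$, identity \eqref{sviaM} forces $\cM(z)\equiv i$, while \eqref{ch1} gives $S(z)=-s(z)$, so both sides of \eqref{kilo} equal $0$. This also serves as a small sanity check that the formula is consistent with the identification $\kappa=S(i)$ recorded in \eqref{assa}, since specializing \eqref{kilo} at $z=i$ and using $\cM(i)=i$ (which follows directly from the definition of $\cM$) yields $\overline{\kappa}S(i)=|\kappa|^{2}$, consistent with $S(i)=\kappa$.

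The only ``obstacle'' is bookkeeping: one must perform the Möbius composition carefully so as not to miss the cancellation of the $\cM$-dependent term in the denominator, which is precisely what turns an a priori fractional-linear relation into the affine identity \eqref{kilo}. No new input about $\dot A$, $\widehat A$, or $A$ is needed beyond what Theorem \ref{start} and definition \eqref{ch1} already encode.
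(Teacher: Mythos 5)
Your proof is correct and follows essentially the same route as the paper: both eliminate $s$ between the definition \eqref{ch1} and the identity \eqref{sviaM} of Theorem \ref{start} and observe the cancellation $(\cM-i)-(\cM+i)=-2i$ in the denominator. The only cosmetic difference is that the paper substitutes for the product $\overline{\kappa}s$ directly (so no division by $\overline{\kappa}$ and no separate $\kappa=0$ case is needed), whereas you solve for $s$ first; the algebra is otherwise identical.
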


\begin{proof}
By definition,  the  characteristic function
$S$ and the Liv\v{s}ic function  $s$ are related by the
M\"obius transformation
$$S=
\frac{s-\kappa}{\overline{\kappa}s-1}.
$$
By Theorem \ref{start}, one obtains that
$$
\overline{ \kappa }s=\frac{\cM-i}{\cM+i}
$$and therefore
\begin{align*}
\overline{\kappa}S&=\overline{\kappa}\cdot
\frac{s-\kappa}{\overline{\kappa}s-1} =
\frac{\overline{\kappa}s-|\kappa|^2}{\overline{\kappa}s-1}=
\frac{\frac{\cM-i}{\cM+i}-|\kappa|^2}{\frac{\cM-i}{\cM+i}-1}\\&=\frac{i}{2}
\left ((1-|\kappa|^2)\cM-i(1+|\kappa|^2)\right )
\end{align*}
which proves \eqref{kilo}.
\end{proof}

\begin{remark}

Theorem \ref{kolo} shows that in case when the von Neumann
 parameter $\kappa$ does not vanish,
 the characteristic function $S$ of a (prime)
triple
is  uniquely determined
by
  the pair $
(\kappa, \cM)$. Therefore, along with the characteristic function $S$,
the pair $
(\kappa, \cM)$, with $\kappa\ne 0$,
can also be considered
to be  a complete unitary  invariant of a prime dissipative  tripple
 $(\dot A, \widehat A, A)$.

However, if $\kappa=0$, and therefore by \eqref{sviaM},
$\cM(z)=i$ for all $z\in \bbC_+$,
the quasi-self-adjoint extension $\widehat A$
coincides with the restriction 
of the adjoint operator $(\dot A)^*$ on
$$
\Dom(\widehat A)=\Dom(\dot A)\dot + \Ker ((\dot A)^*-iI).
$$
Hence, the  prime triples $(\dot A, \widehat A, A)$ with
$\kappa=0$
 are  in one-to-one correspondence with the set of prime symmetric operators.

In this case, the characteristic function $S$
and the Liv\v{s}ic function $s$ coincide (up to a sign),
$$
S(z)=-s(z), \quad z\in \bbC_+.
$$
Therefore, for $S$ to be  the characteristic function
of a triple  $(\dot A, \widehat A, A)$ with
$\kappa =0$ and $\cM(z)=i$ for all $z$  in the upper half-plane it is necessary and sufficient that
$S$ satisfy the Liv\v{s}ic criterion \eqref{vsea0}.

\end{remark}

\section{The spectral analysis of the model  dissipative operator}

In the suggested functional model in the Hilbert space $L^2(\bbR;d\mu)$,
the eigenfunctions of the model dissipative operator
 $\widehat \cB$ from the triple $(\dot \cB, \widehat \cB,
 \cB)$
look exceptionally simple.

\begin{lemma}\label{specpoint} Suppose that
$(\dot \cB, \widehat \cB, \cB)$ is the model triple  in $L^2(\bbR;d\mu)$.
 Then a point $z_0\in \bbC_+$ is an eigenvalue
of the dissipative operator  $\widehat \cB$ if,
and only if, $S(\dot \cB, \widehat \cB, \cB)(z_0)=0$.

 In
this case, the corresponding eigenfunction $f$ is of the form
$$
f(\lambda)=\frac{1}{\lambda-z_0},\quad
\,\, \text{$\mu$-a.e. }
$$
\end{lemma}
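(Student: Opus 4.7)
My plan is to identify an eigenfunction of $\widehat\cB$ at $z_0\in\bbC_+$ with a scalar multiple of the unique deficiency element at $z_0$, and then translate membership in $\dom(\widehat\cB)$ into an explicit scalar equation for $M(z_0)$, which in turn unwinds to $S(z_0)=0$.

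\emph{Step 1 (reduction to $g_{z_0}$).} Since $\widehat\cB\subset(\dot\cB)^*$, any eigenfunction of $\widehat\cB$ with eigenvalue $z_0\in\bbC_+$ lies in $\Ker((\dot\cB)^*-z_0 I)$. Because $\dot\cB$ has deficiency indices $(1,1)$, this kernel is one-dimensional and spanned by the $L^2(\bbR;d\mu)$-function $g_{z_0}(\lambda)=1/(\lambda-z_0)$. Consequently $z_0$ is an eigenvalue iff $g_{z_0}\in\dom(\widehat\cB)$, and in that case the eigenfunction has the claimed form. By \eqref{nacha3} this amounts to
\[
h:=g_{z_0}-c\bigl(g_+-\kappa g_-\bigr)\in\dom(\dot\cB)\quad\text{for some }c\in\bbC,
\]
where $g_\pm(\lambda)=1/(\lambda\mp i)$ and $\kappa:=S(i)$.

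\emph{Step 2 (pinning down $c$).} I first impose $\lambda h\in L^2(\mu)$. Using $\lambda/(\lambda-z)=1+z/(\lambda-z)$,
\[
\lambda h(\lambda)=(1-c+c\kappa)+\frac{z_0}{\lambda-z_0}-\frac{ci}{\lambda-i}-\frac{c\kappa i}{\lambda+i}.
\]
The three rational summands already lie in $L^2(\mu)$, so the constant must vanish. Since $\mu$ is necessarily an infinite measure (otherwise the constant $1\in L^2(\mu)$ would be orthogonal to $\dom(\dot\cB)$, contradicting the density of $\dot\cB$), this forces $c=(1-\kappa)^{-1}$.

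\emph{Step 3 (the integral condition).} With this choice of $c$, the identity $1-c+c\kappa=0$ lets me rewrite $h$ as a combination of Herglotz kernels $(\lambda-w)^{-1}-\lambda/(1+\lambda^2)$ for $w\in\{z_0,i,-i\}$, the otherwise-divergent $\lambda/(1+\lambda^2)$ pieces cancelling. Integrating against $d\mu$ and using $M(i)=i$, $M(-i)=-i$, I obtain
\[
\int h\,d\mu=M(z_0)-c\,i+c\kappa(-i)=M(z_0)-\frac{i(1+\kappa)}{1-\kappa}.
\]
Thus $g_{z_0}\in\dom(\widehat\cB)$ iff $M(z_0)=i(1+\kappa)/(1-\kappa)$, which via Theorem~\ref{drob}'s identity $s(z)=(M(z)-i)/(M(z)+i)$ is equivalent to $s(z_0)=\kappa$, and hence to $S(z_0)=0$ by \eqref{chchch}.

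\emph{Main obstacle.} The delicate point is Step~2: one must justify $1\notin L^2(\mu)$ to force the constant in $\lambda h$ to vanish, and in Step~3 handle the (otherwise divergent) Cauchy integrals by regrouping them into Herglotz kernels. Once these are dispatched, the remaining content is routine M\"obius bookkeeping between $M$, $s$, and $S$.
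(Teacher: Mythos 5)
Your proof is correct and follows essentially the same route as the paper: both reduce the eigenvalue condition to the scalar equation $\int(\cdots)\,d\mu=0$, which (using $M(i)=i$, $M(-i)=-i$) becomes $s(z_0)=\kappa$, i.e.\ $S(z_0)=0$. The only difference is organizational: you identify the eigenvector a priori with the deficiency element $g_{z_0}$, which makes the equivalence manifestly two-sided and yields the stated form of the eigenfunction for free, whereas the paper solves the eigenvalue equation for the $\dom(\dot\cB)$-component explicitly and handles the converse by running the argument backwards.
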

\begin{proof} Suppose that $z_0\in \bbC_+$ is an eigenvalue
of $\widehat \cB$ and
that $f$, $f\in L^2(\bbR;d\mu)$, is the corresponding eigenvector,
that is,
$$
\widehat \cB f=z_0 f, \quad f\in\Dom( \widehat \cB ).$$ Since
$f\in\Dom( \widehat \cB)$, the element $f$ admits the
representation
$$
f(\lambda)=f_0(\lambda)+K\left (\frac{1}{\lambda-i}-\kappa
\frac{1}{\lambda+i}\right ),
$$
where $f_0\in \Dom (\dot \cB)$ and $K$ is some constant. Then
$$
0=((\widehat \cB-z_0I)f)(\lambda)=(\lambda-z_0)f_0(\lambda)+K\left
(\frac{i-z_0}{\lambda-i}+\kappa \frac{i+z_0}{\lambda+i}\right )
$$
and hence
$$
f_0(\lambda)=-\frac{K}{\lambda-z_0}\left
(\frac{i-z_0}{\lambda-i}+\kappa \frac{i+z_0}{\lambda+i}\right ) .$$
Since $f_0\in \Dom (\dot \cB)$,
$$
\int_\bbR f_0(\lambda) d\mu(\lambda)=0
$$
and hence
\begin{align*}
0&=\int_\bbR\frac{1}{\lambda-z_0}\left
(\frac{i-z_0}{\lambda-i}+\kappa \frac{i+z_0}{\lambda+i}\right )
d\mu(\lambda)
\\&=-\int_\bbR\left (\frac{1}{\lambda-z_0}-\frac{1}{\lambda-i}\right )
d\mu(\lambda)
+\kappa \int_\bbR\left
(\frac{1}{\lambda-z_0}-\frac{1}{\lambda+i}\right )d\mu(\lambda)
\\&=
-M(z_0)+M(i)+\kappa( M(z_0)-M(-i))
\\&=-M(z_0)+i+\kappa( M(z_0)+i)
.\end{align*} Therefore,
$$
\kappa =\frac{M(z_0)-i}{M(z_0)+i}=s(\dot \cB, \widehat \cB)(z_0)
$$
and hence the characteristic  function $S(\dot \cB,\widehat \cB, \cB)$
vanishes at the point $z_0$,
$$
S(\dot \cB, \widehat \cB, \cB)(z_0)=\frac{s(\dot \cB, \widehat
\cB)(z_0)-\kappa} {\overline{\kappa}s(\dot \cB, \widehat
\cB)(z_0)-1}=0.
$$
In this case,
\begin{align*}
f(\lambda)&=K\left [\left (\frac{1}{\lambda-i}-\kappa
\frac{1}{\lambda+i}\right ) -\frac{1}{\lambda-z_0}
\left (\frac{i-z_0}{\lambda-i}+\kappa \frac{i+z_0}{\lambda+i}\right )
\right ]\\& =K\left [\frac{1}{\lambda-i}\left  (1-\frac{i-z_0}{\lambda-z_0}\right )
-\kappa\frac{1}{\lambda+i}\left (1+\frac{i+z_0}{\lambda-z_0}\right ) \right ]
\\&
=K\frac{1-\kappa}{\lambda-z_0}.
\end{align*}
So, we have shown that if $ z_0$ is an eigenvalue of
of $\widehat \cB$,  then
$$
S(\dot \cB,\widehat \cB, \cB)(z_0)=0
$$
and that the corresponding eigenelement $f$ is of the form
\begin{equation}\label{eigenf}
f(\lambda)=\frac{1}{\lambda-z_0}.
\end{equation}

 Repeating the same reasoning in the reverse order, one easily shows that
if $S(\dot \cB, \widehat \cB, \cB)(z_0)=0$, then the function
$f$ given by \eqref{eigenf}
belongs to $\Dom (\widehat \cB)$ and $\widehat \cB f=z_0f $.
\end{proof}

For the resolvents of the model dissipative operator $\widehat\cB$
and the self-adjoint (reference)
operator $\cB$ from the model  triple $(\dot \cB, \widehat \cB, \cB)$
 one gets
 the following resolvent formula.
\begin{theorem} Suppose that
$(\dot \cB, \widehat \cB, \cB)$ is the model triple in the Hilbert space
$L^2(\bbR;d\mu) $.

 Then the
resolvent
 of the model dissipative operator $\widehat \cB$  in
$L^2(\bbR;d\mu) $ has the form
$$
(\widehat \cB- zI )^{-1}=(\cB- zI )^{-1}-p(z)(\cdot\, ,
g_{\overline{z}})g_z , $$ with
$$
p(z)=\left (M(\dot \cB,
\cB)(z)+i\frac{\kappa+1}{\kappa-1}\right )^{-1},
$$
$$z\in\rho(\widehat \cB)\cap \rho(\cB).
$$
Here $M(\dot \cB,
\cB)$ is the Weyl-Titchmarsh function associated with the pair
 $(\dot \cB, \cB)$ continued to the lower half-plane by the Schwarz reflection
principle,
and the deficiency elements $g_z$ are given by
$$
g_z(\lambda)=\frac{1}{\lambda-z}, \quad
\,\, \text{$\mu$-a.e. }.
$$

\end{theorem}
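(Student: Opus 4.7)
The plan is to solve the resolvent equation $(\widehat\cB - zI)h = f$ explicitly for arbitrary $f \in L^2(\bbR;d\mu)$, working throughout with the parametrization of $\Dom(\widehat\cB)$ from \eqref{nacha3} and exploiting the inclusion $\widehat\cB \subset (\dot\cB)^*$, which gives $\widehat\cB h = (\dot\cB)^* h$ for every $h \in \Dom(\widehat\cB)$.

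First, I would write any candidate $h\in\Dom(\widehat\cB)$ as
\[
h = h_0 + K\bigl(g_+ - \kappa g_-\bigr),\qquad h_0\in\Dom(\dot\cB),\quad K\in\bbC,
\]
and apply $(\dot\cB)^* - zI$. Using $(\dot\cB)^* g_\pm = \pm i g_\pm$ while $(\dot\cB)^* h_0 = \cB h_0$, the equation $(\widehat\cB - zI)h = f$ becomes
\[
(\cB - zI) h_0 = f - K(i-z) g_+ - K\kappa(i+z) g_-.
\]
Inverting $\cB - zI$ via the partial-fraction identities
\[
(\cB - zI)^{-1} g_+ = \frac{g_z - g_+}{z - i},\qquad (\cB - zI)^{-1} g_- = \frac{g_z - g_-}{z + i},
\]
the $g_\pm$ contributions collapse, and reassembling $h = h_0 + K(g_+ - \kappa g_-)$ produces the clean expression
\[
h = (\cB - zI)^{-1} f + K(1-\kappa)\, g_z.
\]

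The remaining scalar constraint $h_0 \in \Dom(\dot\cB)$, namely $\int_\bbR h_0\, d\mu = 0$, determines $K$. Since each difference $g_z - g_\pm$ decays like $1/\lambda^2$ at infinity, the Herglotz--Nevanlinna representation \eqref{hernev} together with $M(\pm i) = \pm i$ yields $\int_\bbR(g_z - g_\pm)\, d\mu = M(z) \mp i$, while $\int_\bbR (\cB - zI)^{-1} f\, d\mu = (f, g_{\bar z})$ by the very definition of $g_{\bar z}$. The constraint reduces to the single linear equation
\[
(f, g_{\bar z}) + K\bigl[(1-\kappa) M(z) - i(1+\kappa)\bigr] = 0,
\]
and a short algebraic manipulation shows that $K(1-\kappa) = -p(z)(f, g_{\bar z})$, with $p$ exactly as in the statement. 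Substituting back into the formula for $h$ gives the claimed resolvent identity on $\bbC_+$.

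No single step is a serious obstacle; the only delicate point is that $\int_\bbR g_z\, d\mu$ need not converge in isolation, so one works throughout with the combinations $g_z - g_\pm$ whose integrals are controlled by \eqref{hernev}. The denominator $(1-\kappa) M(z) - i(1+\kappa)$ vanishes precisely at the eigenvalues of $\widehat\cB$ by Lemma \ref{specpoint}, hence $p(z)$ is well defined on $\rho(\widehat\cB) \cap \rho(\cB)$, as required. The extension to $z \in \bbC_-\cap \rho(\widehat\cB)\cap\rho(\cB)$ follows by reading $M$ through its Schwarz reflection, as already indicated in the statement.
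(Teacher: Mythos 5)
Your proposal is correct and follows essentially the same route as the paper: decompose $h\in\Dom(\widehat\cB)$ via the von Neumann--type parametrization \eqref{nacha3}, solve the resolvent equation, and use the constraint $\int_\bbR h_0\,d\mu=0$ to pin down the scalar $K$, which yields exactly $K(1-\kappa)=-p(z)(f,g_{\bar z})$. The only difference is cosmetic --- you phrase the partial-fraction step as the vector identities $(\cB-zI)^{-1}g_\pm=(g_z-g_\pm)/(z\mp i)$ while the paper computes pointwise in $\lambda$ --- and your care with the combinations $g_z-g_\pm$ (since $\int g_z\,d\mu$ alone may diverge) matches what the paper implicitly does.
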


\begin{proof} Given $h\in L^2(\bbR; d\mu)$ and $z\in \rho(\widehat \cB)$,
 suppose that
\begin{equation}\label{jkl}
(\widehat \cB- zI)f=h \quad \text{ for  some } f\in \Dom(\widehat
\cB).
\end{equation}
Since $f\in \Dom(\widehat \cB)$, one gets the representation
\begin{equation}\label{eqff}
f(\lambda)=f_0(\lambda)+K\left (\frac{1}{\lambda-i}-
\kappa\frac{1}{\lambda+i}\right )
\end{equation}
for some $f_0\in \Dom(\dot \cB)$ and $K\in \bbC$. Eq. \eqref{jkl}
yields
$$
(\lambda-z)f_0(\lambda)+K\left (\frac{i-z}{\lambda-i}+
\kappa\frac{i+z}{\lambda+i}\right )=h(\lambda)
$$
and hence
\begin{equation}\label{eqff1}
f_0(\lambda)=\frac{h(\lambda)}{\lambda-z}-
\frac{K}{\lambda-z}\left (\frac{i-z}{\lambda-i}+
\kappa\frac{i+z}{\lambda+i}\right ).
\end{equation}
Since $f_0\in \Dom(\dot A)$, and therefore
$$
\int_\bbR f_0(\lambda)d\mu(\lambda)=0,
$$
integrating \eqref{eqff1} against $\mu$, one obtains that
\begin{equation}\label{KK}
K \int_\bbR \frac{1}{\lambda-z}\left (\frac{i-z}{\lambda-i}+
\kappa\frac{i+z}{\lambda+i}\right )d\mu(\lambda)= \int_\bbR
\frac{h(\lambda)}{\lambda-z}d\mu(\lambda).
\end{equation}
Observing that
$$
\int_\bbR \frac{1}{\lambda-z}\left (\frac{i-z}{\lambda-i}+
\kappa\frac{i+z}{\lambda+i}\right
)d\mu(\lambda)=i-M(z)+\kappa(M(z)+i),
$$ with
$
M(z)=M(\dot \cB, \widehat \cB)(z)
$, and solving \eqref{KK} for $K$, one obtains
$$
K=\frac{\int_\bbR
\frac{h(\lambda)}{\lambda-z}d\mu(\lambda)}{(\kappa-1)M(z)+i(1+\kappa)}
.$$ Combining \eqref{eqff} and \eqref{eqff1}, for the element $f$
we have the representation
\begin{align*}
f(\lambda)&=\frac{h(\lambda)}{\lambda-z}+ \frac{K}{\lambda-z}\left
(\frac{\lambda-z}{\lambda-i}-
\kappa\frac{\lambda-z}{\lambda+i}- \left
[\frac{i-z}{\lambda-i}+ \kappa\frac{i+z}{\lambda+i}\right
)\right ]
\\&=
\frac{h(\lambda)}{\lambda-z} -K\frac{\kappa-1}{\lambda-z}
\\&
=\frac{h(\lambda)}{\lambda-z}-\left ( M(z) +
i\frac{\kappa+1}{\kappa-1}
\right)^{-1}\frac{1}{\lambda-z}\int_\bbR
\frac{h(\lambda)}{\lambda-z}d\mu(\lambda),
\\&\quad \quad \quad \quad \quad z\in \rho(\widehat \cB)\cap\rho(\cB),
\end{align*}
which proves the claim.
\end{proof}

\begin{remark}  It is
 easy to see, using \eqref{blog}, that the poles of the function
$p$ in the upper half-plane coincide with the roots of the equation
$$
s(\dot \cB, \cB)(z)=\kappa, \quad z\in \bbC_+,
$$
provided that $\kappa \ne 0$ and $M(z)\ne i$  identically
in the upper half-plane.
Therefore, the zeros of the characteristic function $S(\dot \cB, \widehat \cB, \cB)$
in the upper half-pane determine the poles of the resolvent of
 the dissipative operator
$\widehat \cB$ (cf., Lemma \ref{specpoint}).

We also remark that if $\kappa = 0$ and $M(z)= i$ for all $z\in \bbC_+$, then the point spectrum of
the dissipative operator $\widehat \cB$ fills in the whole
open upper half-plane $\bbC_+$.
\end{remark}

Given a triple $(\dot A, \widehat A, A)$
satisfying  Hypothesis \ref{setup},  the following
corollary provides an analog of the 
Krein formula for resolvents
 for  all quasi-self-adjoint
dissipative extensions of the symmetric operator $\dot A$
with deficiency indices $(1,1)$ 
(cf., \cite{MMog}, where in the framework of the
 boundary triplets theory
the general  resolvent formula for dual pairs of linear
 relations was obtained). 

\begin{corollary}Under  Hypothesis \ref{setup},
 the following resolvent formula
 \begin{equation}\label{krres}
(\widehat A-zI)^{-1}=(A-zI)^{-1}-p(z)(\cdot\, ,g_{\overline{z}})g_z,
\end{equation}
$$z\in \rho(\widehat A)\cap \rho(A),
 $$
holds.

Here
\begin{itemize}
\item[(a)]
\begin{align}
 p(z)&=\left (M(\dot A,
A)(z)+i\frac{\kappa+1}{\kappa-1}\right )^{-1}
\label{WTf}\\&
=i\left (\frac{s(\dot A,A)(z)+1}{s(\dot A,A)(z)-1}
-\frac{\kappa+1}{\kappa-1}\right )^{-1};
\label{Lf}\end{align}
\item[(b)]$M(\dot A, A)$ and $s(\dot A, A)$ are
 the Weyl-Titchmarsh
and   the Liv\v{s}ic function
of the pair $(\dot A, A)$, respectively;
\item[(c)]
$g_z$ are deficiency elements of $\dot A$,
$$
g_z\in \Ker((\dot A)^*-zI),
$$
 satisfying the normalization condition
\begin{equation}\label{norcon}
\|g_z\|=\left (\int_\bbR \frac{d\mu(\lambda)}{|\lambda-z|^2}
\right )^{1/2}
\end{equation}
(the deficiency elements $g_z$ can be chosen to be analytic in
$z\in  \rho(\widehat A)\cap \rho(A)$);
\item[(d)]
 $\mu$ is the measure from the Herglotz-Nevanlinna representation
$$M(\dot A, A)(z)=\int_\bbR \left
(\frac{1}{\lambda-z}-\frac{\lambda}{1+\lambda^2}\right )
d\mu;
$$

and
\item[(e)] $\kappa$ is the von Neumann
 parameter characterizing the domain of
 the dissipative extension $\widehat A$,
\begin{equation}\label{vnp}
g_i-g_{-i}\in \Dom (A) \quad \text{and}\quad g_i-\kappa g_{-i}\in
\Dom (\widehat A).
\end{equation}
\end{itemize}
\end{corollary}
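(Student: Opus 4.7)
The plan is to derive the corollary as a direct transcription of the preceding theorem's resolvent formula for the model triple, transported back via the unitary equivalence provided by the functional model Theorem \ref{fund3}(ii).

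First I would reduce to the prime case: the Hilbert space decomposes as $\cH=\cH_0\oplus\cH_1$ with $\dot A|_{\cH_0}$ self-adjoint and $\dot A|_{\cH_1}$ prime; on $\cH_0$ both $A$ and $\widehat A$ restrict to $\dot A|_{\cH_0}$ (since the deficiency subspace is trivial there) and the deficiency elements $g_z$ live entirely in $\cH_1$, so \eqref{krres} reduces to the tautology $0=0$ on $\cH_0$. Hence it suffices to prove the formula under the additional assumption that $\dot A$ is prime.

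Next I would invoke Theorem \ref{fund3}(ii) to obtain a unitary $\cU:\cH\to L^2(\bbR;d\mu)$, with $\mu$ the representing measure of $M(\dot A,A)$, intertwining $(\dot A,\widehat A,A)$ with the model triple $(\dot\cB,\widehat\cB,\cB)$. The deficiency elements of $\dot A$ can then be chosen so that $\cU g_z(\lambda)=1/(\lambda-z)$ for every $z\in\rho(\widehat A)\cap\rho(A)$; these are manifestly analytic in $z$, and the isometry of $\cU$ immediately yields the norm identity \eqref{norcon}. The preceding theorem, applied to the model triple, provides
$$
(\widehat\cB-zI)^{-1}=(\cB-zI)^{-1}-p(z)(\,\cdot\,,\widetilde g_{\overline{z}})\,\widetilde g_z,\qquad \widetilde g_z(\lambda)=\frac{1}{\lambda-z},
$$
with $p$ in the form \eqref{WTf}. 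Conjugating this identity by $\cU^{-1}$ produces \eqref{krres} in $\cH$, because $M(\dot\cB,\cB)=M(\dot A,A)$ under the intertwining.

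Finally, the equivalence of \eqref{WTf} and \eqref{Lf} is a purely algebraic consequence of the Cayley relation of Theorem \ref{fund22}(ii), namely $M(\dot A,A)(z)=(1/i)(s(z)+1)/(s(z)-1)$, while the characterization \eqref{vnp} of $\kappa$ is a direct restatement of \eqref{ddoomm14} in Hypothesis \ref{setup}. No serious obstacle arises in this plan: the substantive computation has already been carried out in the preceding theorem, and Theorem \ref{fund3}(ii) is precisely engineered to make the passage from the model triple to the general prime triple automatic; the mildest point to verify carefully is that the chosen lift of the deficiency elements $g_z=\cU^{-1}\widetilde g_z$ is joint analytic in $z$ on $\rho(\widehat A)\cap\rho(A)$, which is inherited from the analyticity of $\widetilde g_z$ in $L^2(\bbR;d\mu)$.
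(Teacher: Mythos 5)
Your proposal is correct and follows essentially the route the paper intends: the corollary is obtained by applying the preceding model resolvent formula to $(\dot \cB,\widehat\cB,\cB)$ and transporting it back through the unitary equivalence of Theorem \ref{fund3}(ii), with \eqref{norcon} read off from $\|g_z\|=\|\cU g_z\|_{L^2(d\mu)}$ and \eqref{Lf} from the Cayley relation \eqref{blog}. Your explicit reduction to the prime case is a point the paper leaves implicit, and it is handled correctly.
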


\begin{remark} We would like to stress 
 that 
\begin{itemize}
\item[i)] the von Neumann parameter
$\kappa$ from \eqref{vnp},
\item[ii)]
 the Liv\v{s}ic function $s(\dot A, A)$, 
 
and
\item[iii)]
the Weyl-Titchmarsh function
$M(\dot A, A)$,
\end{itemize}
 can easily  be recovered from the functional parameter of 
 the (prime) triple, the 
characteristic function $S=S(\dot A, \widehat A, A)$, 
which is a complete unitary invariant of $(\dot A, \widehat A, A)$. 

Indeed, 
\begin{align*}
\kappa  &=S(i),
\\s(\dot A, A)(z)&=\frac{S(z)-\kappa}{\overline{\kappa} S(z)-1}, 
\\
 M(\dot A, A)(z)&=\frac1i\cdot \frac{s(\dot A, A)(z)+1}{s(\dot A, A)(z)-1},
\end{align*}
$$z\in \bbC_+,
$$
with $ M(\dot A, A)$
continued to the lower half-plane by the Schwarz reflection principle
$$
M(\dot A, A)(z)=\overline{M(\dot A, A)(\overline{z})},\quad z\in \bbC_-.
$$
\end{remark}
\begin{remark} The resolvent formula \eqref{krres}--\eqref{Lf}
also holds if $|\kappa|=1$ and hence $\widehat A$ is self-adjoint. In this case, it
 provides the standard  Krein resolvent formula
for self-adjoint extensions of $\dot A$ via the von Neumann extension
 parameter $\kappa$
and the Weyl-Titchmarsh (Liv\v{s}ic)  function.

We also notice that although it appears unlikely 
that the explicit normalization condition \eqref{norcon} 
has been missed in the literature, we were not able to locate an appropriate reference.
\end{remark}

\begin{remark}
We  remark that if two triples
$(\dot A, A ,\widehat A_1)$ and $(\dot A, A ,\widehat A_2)$ satisfy Hypothesis
\ref{setup}
with the von Neumann parameters $\kappa_1$ and $\kappa_2$, respectively,
then one gets the following resolvent formula
 for the dissipative extensions $\widehat A_1$ and $\widehat A_2$
refining, in the rank one setting,  a result in \cite{K}:
$$
(\widehat A_2-zI)^{-1}=(\widehat A_1-zI)^{-1}-q(z)(\cdot, g_{\overline{z}})g_z,
$$
where
$
q(z)=p_2(z)-p_1(z)
$
with
\begin{align*}
p_k(z)&=
\left (M(\dot A,
A)(z)+i\frac{\kappa_k+1}{\kappa_k-1}\right )^{-1},
\\&=i\left (\frac{s(\dot A,A)(z)+1}{s(\dot A,A)(z)-1}
-\frac{\kappa_k+1}{\kappa_k-1}\right )^{-1},\quad k=1,2,
\end{align*}
$$
z\in \rho(A)\cap \rho(\widehat A_1)\cap \rho(\widehat A_2).
$$
We recall that if $S_1$ and $S_2$ are the characteristic functions of
the triples $(\dot A, A ,\widehat A_1)$ and $(\dot A, A ,\widehat A_2)$,
respectively, then
$$
s(\dot A,A)=\frac{S_k-\kappa_k}{\overline{\kappa_k} S-1},
\quad k=1,2.$$
\end{remark}

\appendix
\section{The spectral properties of the model symmetric operator}

 For completeness, we provide a summary of
 spectral properties of the model symmetric operator $\dot \cB $
in $L^2(\bbR; d\mu)$
(we refer to  a relevant  discussion in   \cite{GT})
and obtain
  the characterization of the core of its
 spectrum in terms of the measure $\mu$.

Denote by $\fm$ the class of infinite Lebesgue-Stieltjes measures
$\mu$ on the real axis, $\mu(\bbR)=\infty $, such that
 \begin{equation}\label{norm11}
\int_\bbR\frac{d\mu(\lambda)}{1+\lambda^2}=1.
\end{equation}

Our main goal is to show that every measure $\mu$ from the measure
class $\fm$ gives rise to a
  prime symmetric
operator in the Hilbert space $L^2(\bbR; d\mu)$ with deficiency
indices $(1,1)$ via the following construction.

Given $\mu\in \fm$, in the Hilbert space
$L^2(\bbR;d\mu)$ introduce
  the multiplication (self-adjoint)
operator $\cB$ by the  independent variable
 on
\begin{equation}\label{nachaloo}
\Dom(\cB)=\left \{f\in \,L^2(\bbR;d\mu) \,\bigg | \, \int_\bbR
\lambda^2 | f(\lambda)|^2d \mu(\lambda)<\infty \right \}
\end{equation} and denote by  $\dot \cB$  its
 restriction
on
\begin{equation}\label{nachaloA}
\Dom(\dot \cB)=\left \{f\in \Dom(\cB)\, \bigg | \, \int_\bbR
f(\lambda)d \mu(\lambda) =0\right \}.
\end{equation}

 We remark that the membership  $f\in \Dom(\cB)$ means that
$$
\int_\bbR (1+\lambda^2) |f(\lambda)|^2d\mu(\lambda)<\infty.
$$
Therefore, using Cauchy-Schwarz,
\begin{align}
\int_\bbR |f(\lambda)|d\mu(\lambda) &\le \left (\int_\bbR
(1+\lambda^2) |f(\lambda)|^2d\mu(\lambda)\right )^{1/2} \left
(\int_\bbR\frac{d\mu(\lambda)}{1+\lambda^2}\right )^{1/2}
\nonumber \\ &= \left (\int_\bbR (1+\lambda^2)
|f(\lambda)|^2d\mu(\lambda)\right )^{1/2}<\infty.
\label{welld}\end{align}

Inequality \eqref{welld} shows that  the following unbounded
 functional $\ell$ on
$ \Dom(\ell)=\Dom(\cB) $ given by
$$
\ell(f) = \int_\bbR f(\lambda)d \mu(\lambda), \quad f\in \dom(\ell),
$$
is well defined and hence the restriction
  $\dot \cB$ on $\Dom(\dot \cB)$ given by \eqref{nachaloA} is well defined
as well.

\begin{theorem}\label{modsym}
Suppose  that $(\dot \cB,\cB)$ is the model pair in the Hilbert
space $L^2(\bbR;d\mu)$ as
 defined by \eqref{nachaloo} and \eqref{nachaloA}.
Then $\dot \cB$ is a prime, densely defined, closed, symmetric
operator with deficiency indices $(1,1)$.
\end{theorem}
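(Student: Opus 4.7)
The plan is to verify in turn that $\dot\cB$ is symmetric, closed, densely defined, has deficiency indices $(1,1)$, and is prime. Symmetry is immediate because $\dot\cB\subset\cB$ with $\cB$ self-adjoint. For closedness, the estimate \eqref{welld} already in the excerpt shows that the functional $\ell(f)=\int f\,d\mu$ is continuous on $\Dom(\cB)$ equipped with the graph norm; therefore $\Dom(\dot\cB)=\Ker\ell\cap\Dom(\cB)$ is graph-closed, and the restriction of the closed operator $\cB$ to a graph-closed subspace of its domain is itself closed.

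For density of $\Dom(\dot\cB)$ in $L^2(\bbR;d\mu)$, I would fix the reference element $\psi(\lambda)=(1+\lambda^2)^{-1}$, which belongs to $\Dom(\cB)$ and, by the normalization \eqref{norm11}, satisfies $\ell(\psi)=1$. For every bounded, compactly supported function $\phi$ the combination $\phi-\ell(\phi)\psi$ then lies in $\Dom(\dot\cB)$. If $h\in L^2(\bbR;d\mu)$ were orthogonal to all such elements, the resulting identity $\int\bar h\phi\,d\mu=\ell(\phi)\int\bar h\psi\,d\mu$, valid for every such $\phi$, would force $h$ to equal a constant $\mu$-a.e.; since $\mu(\bbR)=\infty$, that constant must vanish.

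For the deficiency indices, the natural candidates are $g_z(\lambda)=(\lambda-z)^{-1}$ with $z\in\bbC\setminus\bbR$, which lie in $L^2(\bbR;d\mu)$ because $|\lambda-z|^{-2}$ is comparable at infinity to $(1+\lambda^2)^{-1}$. The algebraic identity $\lambda=(\lambda-\bar z)+\bar z$, combined with $\int f\,d\mu=0$ for $f\in\Dom(\dot\cB)$, yields $(\dot\cB f,g_z)=\bar z(f,g_z)$, so $g_z\in\Ker((\dot\cB)^*-zI)$ and the deficiency indices are at least $(1,1)$. Equality then follows from von Neumann's second formula: any self-adjoint extension of $\dot\cB$ has domain of codimension $n_+=n_-$ over $\Dom(\dot\cB)$, and since $\cB$ is such an extension and $\Dom(\dot\cB)$ is the kernel of a single nonzero continuous functional on $\Dom(\cB)$, this codimension equals one.

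The subtlest step is primeness. I would invoke the standard criterion that $\dot\cB$ is prime precisely when the closed linear span of $\{g_z:z\in\bbC\setminus\bbR\}$ coincides with $L^2(\bbR;d\mu)$, the orthogonal complement of that span being the maximal reducing subspace on which $\dot\cB$ is self-adjoint. Suppose $h\in L^2(\bbR;d\mu)$ annihilates every $g_z$; then the Cauchy-type transform $F(z)=\int(\lambda-z)^{-1}h(\lambda)\,d\mu(\lambda)$, well defined and analytic on $\bbC\setminus\bbR$ by Cauchy--Schwarz against $g_z\in L^2(\bbR;d\mu)$, vanishes throughout $\bbC_-$. A direct partial-fraction calculation shows that the finite complex Borel measure $d\tilde\nu=(1+\lambda^2)^{-1}h\,d\mu$ has Cauchy transform extending analytically across $\bbR$, with the apparent pole at $z=-i$ removed by the vanishing of $F(-i)$. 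Standard uniqueness for Cauchy transforms of finite complex Borel measures then forces $\tilde\nu=0$, hence $h=0$ $\mu$-a.e. The principal technical obstacle is that $h\,d\mu$ itself need not be a finite measure, circumvented precisely by passing to $\tilde\nu$, whose finiteness is guaranteed by Cauchy--Schwarz and the normalization \eqref{norm11}.
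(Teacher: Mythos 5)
Your proof is correct, and while the symmetry, closedness, and deficiency-index steps essentially coincide with the paper's (the paper phrases closedness via the explicit computation $\int f_n\,d\mu=((\cB-iI)f_n,h)$ with $h(\lambda)=(\lambda+i)^{-1}$, which is the same continuity-of-$\ell$ fact you use, and it gets the upper bound on the deficiency indices from the same codimension-one decomposition $\Dom(\cB)=\Dom(\dot\cB)\dot+\mathrm{span}\{(1+\lambda^2)^{-1}\}$), you diverge genuinely on density and on primeness. For density the paper takes $h\perp\Dom(\dot\cB)$, truncates to $h_n=h\chi_{(-n,n)}$, tests against the elements $h_n(\lambda)(\lambda-z)^{-1}-(1+\lambda^2)^{-1}\int h_n(s)(s-z)^{-1}d\mu(s)$, and invokes the uniqueness theorem for Cauchy integrals to force $h=\mathrm{const}$; your test class of bounded compactly supported $\phi$ minus $\ell(\phi)\psi$ reaches the same conclusion by purely measure-theoretic means (using that $\mu$ is finite on compacts), which is more elementary. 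For primeness the two approaches are essentially dual: the paper argues directly on reducing subspaces, using simplicity of the multiplication operator to write $\cH_0=\Ran E_\cB(\delta)$ and then showing $\chi_{\delta\cap(-n,n)}\in\Dom(\dot\cB)$ forces $\mu(\delta)=0$ --- fully self-contained and avoiding both the ``span of deficiency subspaces'' criterion and Stieltjes inversion; you instead import that criterion as a black box and then run a Cauchy-transform uniqueness argument, which is heavier machinery but has the merit of exhibiting the deficiency elements $g_z$ as a total family, a fact the paper uses later anyway. Two small points to tighten: orthogonality of $h$ to all $g_z$, $z\in\bbC\setminus\bbR$, gives $F\equiv 0$ on \emph{both} half-planes, not just $\bbC_-$, and you do need both to conclude that the jump of the Cauchy transform of $\tilde\nu$ across $\bbR$ vanishes; and in the partial-fraction step the relation is $F(z)=a+bz+(1+z^2)G(z)$ with $a=\int\lambda(1+\lambda^2)^{-1}h\,d\mu$, $b=\int(1+\lambda^2)^{-1}h\,d\mu$, so what is removed is the pair of possible poles of $-(a+bz)/(1+z^2)$ at $\pm i$, not a pole of $F$ itself. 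Neither affects the validity of the argument.
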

\begin{proof}

Since $\dot \cB$ is a restriction of a self-adjoint operator, the
operator $\dot \cB$ is symmetric.

To show that $\dot \cB$ is closed, assume that
$\{f_n\}_{n=1}^\infty$ is a sequence such that $f_n\in \Dom(\dot
\cB)$ for all $n$ and
$$
\lim_{n\to \infty}f_n=f\quad \text{ and } \quad \lim_{n\to
\infty}\dot \cB f_n=g
$$
for some $f, g\in L^2(\bbR; d\mu)$.  Since $\dot \cB$ is a
restriction of the
 self-adjoint operator $\cB$, one gets that
$$
\lim_{n\to \infty}f_n=f\quad \text{ and } \quad \lim_{n\to \infty}
\cB f_n=g
$$
and hence $f\in \Dom(\cB)$ and $g =\cB f$ for $\cB$ is a closed
operator.

Since $f_n\in \Dom(\dot \cB)$, one gets that
$$
\int_\bbR f_n(\lambda)d\mu(\lambda)= \int_\bbR
(\lambda-i)f_n(\lambda)\overline{\frac{1}{\lambda +i}}
d\mu(\lambda)=0
$$
which means that $(\cB-iI)f_n$ is orthogonal to
 the element $h\in L^2(\bbR; d\mu)$ given by
$$
h(\lambda)=\frac{1}{\lambda +i}.
$$
Therefore,
$$
\lim_{n\to \infty}((\cB-iI)f_n, h)=((\cB-iI)f, h)=0
$$
which means that $\int_\bbR f(\lambda)d\mu(\lambda)=0$ and hence
$f\in \Dom(\dot \cB)$.

To prove that $\dot \cB$ is densely defined, we observe first,
that the quotient space $\Dom(\cB)/\Dom(\dot \cB)$ is
one-dimensional. More specifically,
\begin{equation}\label{onedim}
\Dom(\cB)=\Dom(\dot \cB)\dot +\text{span}\left \{g\right \},
\end{equation}
where the function $g$ is given by
$$g(\lambda)=\frac{1}{1+\lambda^2},\quad \text{$\mu$-a.e.}\,.
$$
Indeed, for any $f\in \Dom(\cB)$, the function
$$
 g(\lambda)=f(\lambda)-\frac{\int_\bbR f(s)d\mu(s)}{1+\lambda^2}
$$
belongs to $\Dom(\dot \cB)$, since
$$
\int_\bbR g(\lambda)d\mu(\lambda)= \int_\bbR
f(\lambda)d\mu(\lambda)-\int_\bbR f(s)d\mu(s)\int_\bbR
\frac{d\mu(\lambda)}{1+\lambda^2}=0,
$$
due to the normalization
condition
$$
\int_\bbR\frac{d\mu(\lambda)}{1+\lambda^2}=1.
$$

To show that $\dom(\dot \cB)$ is dense in $L^2(\bbR; d\mu)$,
assume that $h\perp \Dom(\dot \cB)$.

Given $n\in \bbN$, introduce the function
$$
h_n(\lambda)=h(\lambda) \chi_{(-n,n)}(\lambda),
$$
where $  \chi_{(-n,n)}(\cdot )$ is the
 characteristic function of the interval $(-n,n)$.

The function
$$
g_n(\lambda, z)=\frac{h_n(\lambda)}{\lambda -z}-
\frac{1}{1+\lambda^2}\int_\bbR\frac{h_n(s)}{s-z}d\mu(s)
$$
 obviously belongs to $\dom(\dot \cB)$. In particular,
$$
g_n(\cdot, z)\perp h, \quad n\in \bbN,
$$
which means that
\begin{align*}
\int_\bbR \frac{h_n(\lambda)\overline{h(\lambda)}}{\lambda
-z}\,d\mu(\lambda)
&=\int_\bbR\frac{\overline{h(\lambda)}}
{1+\lambda^2}\,d\mu(\lambda)\cdot \int_\bbR\frac{h_n(s)}{s-z}\,d\mu(s)
\\&
=\int_\bbR\frac{h_n(\lambda)\overline{m}}{\lambda-z}\,d\mu(\lambda).
\end{align*}
Here we used the notation
$$ m=\int_\bbR\frac{{h(\lambda)}} {1+\lambda^2}d\mu(\lambda).
$$

By the uniqueness theorem for Cauchy integrals, one gets that
$$
h_n(\lambda)\overline{h(\lambda)}=|h(\lambda)|^2=h(\lambda)\overline{m}
\quad \text{ for }
 \mu\text{-a.e. }\lambda\in (-n,n).
$$

Therefore,
$$
h(\lambda)=m\quad \text{ for }
  \mu\text{-a.e. }\lambda\in (-n,n).
$$
Since $n$ is arbitrary, one concludes that
$$
h(\lambda)=m\quad
 \mu\text{-a.e. }.
$$
According to the  hypothesis, the measure $\mu$ is infinite. Therefore, $h\in L^2(\bbR;
d\mu)$ only if the constant $m$ is zero. That is,  $h=0$ which
proves that $\Dom(\dot \cB)$ is dense in $L^2(\bbR; d\mu)$, and hence,
 the operator $\dot \cB$ is densely defined.

Next, we prove that $\dot \cB$ is a prime operator.

Assume that $\cH_0$ reduces $\dot \cB$ and that the part $\dot
\cB|_{\cH_0}$ is a self-adjoint operator. Since  the  self-adjoint
multiplication  operator $\cB$ in $\cH=L^2(\bbR;d\mu)$ is an
extension of the symmetric operator $\dot \cB$, the subspace
$\cH_0$ also reduces the self-adjoint operator $\cB$ and hence
$$
\cH_0=\Ran E_\cB(\delta) \quad \text{for some Borel set }
\delta\subset \bbR
$$
for $\cB$ has a simple spectrum. Here $E_\cB(\cdot)$ denotes the projection-valued
 spectral measure of the self-adjoint operator $\cB$.

One observes that the function $f(\cdot )=\chi_{\delta\cap
(-n,n)}(\cdot)$ belongs to $\Dom(\cB|_{\cH_0})\subset \Dom(\dot
\cB)$. In particular,
$$
\int\limits_{(-n,n)\,\cap \,\delta}d\mu(\lambda)=\mu\left ((-n,n)\cap
\delta\right )=0\quad \text{ for all }n\in \bbN.
$$
Hence, $\mu (\delta)=0$ and therefore, $\cH_0=0$, proving that
$\dot \cB$ is a prime symmetric operator.

Finally, we  prove that $\dot \cB$ has deficiency indices
$(1,1)$.

 We claim that
$$
\Ker((\dot \cB)^*\mp iI)=\text{span}\{g_\pm\},
$$
where
$$
g_\pm(\lambda)=\frac{1}{\lambda\mp i}, \quad \text{$\mu$-a.e. }.
$$

First, we prove the inclusion
$$
\Ker((\dot \cB)^*\mp iI)\subset\text{span}\{g_\pm\}.
$$
Indeed, it suffices to show that $( (\dot \cB\pm iI)f,g_\pm)=0$
for all $f\in \Dom(\dot \cB)$. One computes
$$
(\dot \cB\pm iI)f,g_\pm)= \int_\bbR (\lambda\pm
i)f(\lambda)\overline{\frac{1}{\lambda\mp i}}d\mu(\lambda)
=\int_\bbR f(\lambda)d\mu(\lambda)=0,
$$
which shows that $\dot \cB$ has deficiency indices at least
$(1,1)$. Now the claim follows from \eqref{onedim} and the
observation that
$$
\frac{1}{2i}(g_+(\lambda)-g_-(\lambda))=\frac{1}{\lambda^2+1}
,\quad \text{$\mu$-a.e.}\,\,.$$
\end{proof}

The following  result characterizes the core of the spectrum of
the symmetric operator $\dot \cB$ as the set of non-isolated
points of the support of the measure $\mu$.

Recall that
 a point $\lambda\in \bbC$ is said to be  quasi-regular for  an operator $T$
 if
$T-\lambda I$ has a continuous inverse on $\Ran(T-\lambda I)$ and
the
 core of the spectrum of $T$ is defined to be the complement
to the set of its quasi-regular points.

\begin{theorem}\label{speccore} Assume that $\dot \cB$ is a
symmetric operator from the model pair in the Hilbert space
$L^2(\bbR,d\mu)$.

Then
 a  point $\lambda_0$, $\lambda_0\in \bbR$,
is a quasi-regular point for the symmetric operator $\dot \cB$ if
and only if there exists an $\varepsilon >0$ such that
\begin{equation}\label{critt}
\mu\left((-\varepsilon+\lambda_0, \lambda_0) \cup (\lambda_0,
\lambda_0+\varepsilon)\right )=0.
\end{equation}
\end{theorem}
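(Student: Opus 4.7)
The plan is to analyze when the multiplication operator $\dot\cB - \lambda_0 I$ is bounded below on $\Dom(\dot\cB) = \{f \in \Dom(\cB) : \int_\bbR f\, d\mu = 0\}$; since $\dot\cB$ is closed, being bounded below is equivalent to quasi-regularity of $\lambda_0$.

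For ($\Leftarrow$), assume the punctured gap \eqref{critt}. If $\mu(\{\lambda_0\}) = 0$, the gap extends through $\lambda_0$, so $\lambda_0 \notin \supp \mu$ and hence $\lambda_0 \in \rho(\cB)$; then $\dot\cB - \lambda_0 I$ inherits a bound below from the bounded inverse of $\cB - \lambda_0 I$. If $\mu(\{\lambda_0\}) > 0$, I decompose each $f \in \Dom(\dot\cB)$ uniquely as $f = c\,\chi_{\{\lambda_0\}} + g$ with $g = 0$ $\mu$-a.e.\ on $\{\lambda_0\}$; the zero-integral constraint forces $c = -\mu(\{\lambda_0\})^{-1}\int g\,d\mu$, while the action becomes $(\dot\cB - \lambda_0 I)f = (\lambda-\lambda_0)g$. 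The gap places the $\mu$-essential support of $g$ inside $\{|\lambda-\lambda_0|\ge\varepsilon\}$, giving $\|g\|_{L^2}^2 \le \varepsilon^{-2}\|(\lambda-\lambda_0)g\|_{L^2}^2$, and by Cauchy--Schwarz with weight $1/|\lambda - \lambda_0|$,
\begin{equation*}
\|g\|_{L^1(d\mu)}^2 \le \left(\int_{|\lambda-\lambda_0|\ge\varepsilon}\frac{d\mu(\lambda)}{|\lambda-\lambda_0|^2}\right)\|(\lambda-\lambda_0)g\|_{L^2}^2.
\end{equation*}
The parenthesised factor is finite because $(1+\lambda^2)/|\lambda-\lambda_0|^2$ is bounded on $\{|\lambda-\lambda_0|\ge\varepsilon\}$ and $\int_\bbR d\mu/(1+\lambda^2) = 1$. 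Assembling $\|f\|^2 = |c|^2\mu(\{\lambda_0\}) + \|g\|^2$ then yields $\|f\|\le C\,\|(\dot\cB - \lambda_0 I)f\|$ for some constant $C$ depending only on $\varepsilon$ and $\mu(\{\lambda_0\})$.

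For ($\Rightarrow$), I argue contrapositively by building a Weyl sequence. Assuming \eqref{critt} fails for every $\varepsilon > 0$, the measure $\mu$ charges every punctured neighborhood of $\lambda_0$, so I can extract a sequence of pairwise disjoint bounded intervals $I_n$ with $0 < \mu(I_n) < \infty$, $\lambda_0 \notin \overline{I_n}$, and $\delta_n := \sup_{\lambda\in I_n}|\lambda-\lambda_0| \to 0$ (finiteness of $\mu(I_n)$ is automatic from $\int d\mu/(1+\lambda^2) = 1$ and the boundedness of $I_n$). Setting $f_n = \alpha_n \chi_{I_n} + \beta_n \chi_{I_{n+1}}$, I solve the two linear conditions $\int f_n\,d\mu = 0$ and $\|f_n\| = 1$, which form a consistent nontrivial system since $\mu(I_n), \mu(I_{n+1}) > 0$. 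Then $f_n \in \Dom(\dot\cB)$ and $\|(\dot\cB - \lambda_0 I)f_n\|^2 \le \max(\delta_n,\delta_{n+1})^2 \to 0$, so $\dot\cB - \lambda_0 I$ is not bounded below and $\lambda_0$ is not quasi-regular.

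The main obstacle is the ($\Leftarrow$) direction when $\mu$ carries an atom at $\lambda_0$: the Dirac coefficient $c$ lies in the kernel of $\cB - \lambda_0 I$, so it is invisible to the operator output and must be recovered entirely through the zero-integral side condition. The key trick is to funnel this recovery through the weighted Cauchy--Schwarz estimate above, which exploits precisely the normalization $\int_\bbR d\mu/(1+\lambda^2) = 1$ built into the class $\fm$.
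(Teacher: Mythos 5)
Your proposal is correct and follows essentially the same route as the paper: for the sufficiency of \eqref{critt} you isolate the possible atom at $\lambda_0$, bound the off-atom part using the $\varepsilon$-gap, and recover the atomic coefficient from the zero-integral constraint via a weighted Cauchy--Schwarz estimate whose finiteness rests on the normalization $\int_\bbR d\mu(\lambda)/(1+\lambda^2)=1$, exactly as in the paper (your version gives a direct bound where the paper derives a quadratic inequality for the ratio $\|u\|/\|(\dot\cB-\lambda_0 I)u\|$, a cosmetic difference). For necessity both arguments build the same kind of two-bump, zero-mean test functions supported near $\lambda_0$ on sets of positive measure, so the approaches coincide.
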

\begin{proof} {\it ``Only If'' } Part. Suppose that \eqref{critt} holds true.
If $\mu(\{\lambda_0\})=0$, then the point $\lambda_0$ belongs to
the resolvent set of the
 self-adjoint operator $\cB$ and hence $\lambda_0$ is automatically a
quasi-regular point for the symmetric restriction $\dot \cB$ of
$\cB$.

If $\mu(\{\lambda_0\})>0$, one proceeds as follows.

 Suppose that
\begin{equation}\label{poslab}
u\in \Dom(\dot \cB).
\end{equation} Then
\begin{align*}
\|u\|^2&=\int_\bbR |u(\lambda)|^2d\mu(\lambda)=
\int\limits_{|\lambda-\lambda_0|\ge \varepsilon}
|u(\lambda)|^2d\mu(\lambda) +|u(\lambda_0)|^2 \mu(\{\lambda_0\})
\\ &\le
\left (\int\limits_{|\lambda-\lambda_0|\ge \varepsilon}
\frac{|u(\lambda)|^2}{(\lambda-\lambda_0)^2}d\mu(\lambda)\right
)^{1/2} \left (\int\limits_{|\lambda-\lambda_0|\ge
\varepsilon}(\lambda-\lambda_0)^2 |u(\lambda)|^2d\mu(\lambda)
\right )^{1/2}
\\ &+|u(\lambda_0)|^2 \mu(\{\lambda_0\})
\le \varepsilon^{-1}\|u\| \, \|(\dot
\cB-\lambda_0I)u\|+|u(\lambda_0)|^2 \mu(\{\lambda_0\}).
\end{align*}
Since \eqref{poslab} implies
$$
\int\limits_{|\lambda-\lambda_0|\ge \varepsilon}u(\lambda)d\mu(\lambda)+
u(\lambda_0)\mu(\{\lambda_0\})=0,
$$
and hence
$$
u(\lambda_0)=-(\mu(\{\lambda_0\}))^{-1}
\int\limits_{|\lambda-\lambda_0|\ge \varepsilon}u(\lambda)d\mu(\lambda),
$$
one gets the inequality
\begin{align*}
\|u\|^2&\le \varepsilon^{-1}\|u\| \, \|(\dot \cB-\lambda_0I)u\|
+
( \mu(\{\lambda_0\})^{-1}\left (\int\limits_{|\lambda-\lambda_0|\ge
\varepsilon}u(\lambda)d\mu(\lambda)\right )^2
\\&
\le \varepsilon^{-1}\|u\| \, \|(\dot \cB-\lambda_0I)u\|+  (\mu(\{\lambda_0\})^{-1}
\int\limits_{|\lambda-\lambda_0|\ge \varepsilon} \frac{d
\mu(\lambda)}{(\lambda-\lambda_0)^2}\|(\dot
\cB-\lambda_0I)u\|^2.
\end{align*}
Thus, the ratio
$$
r=\frac{\|u\|}{\|(\dot \cB-\lambda_0 I)u\|}
$$
satisfies the inequality
$$
r^2\le \varepsilon^{-1} r+(\mu(\{\lambda_0\}))^{-1}
\int\limits_{|\lambda-\lambda_0|\ge \varepsilon} \frac{d
\mu(\lambda)}{(\lambda-\lambda_0)^2},
$$
and therefore $r$ is uniformly bounded with respect to
$\varepsilon$ which proves that the point $\lambda_0$ is a
quasi-regular point.

{\it ``If"}  Part. If \eqref{critt} does not hold,
 and therefore $\lambda_0$ is not an isolated point from
the support of the measure $\mu$, we proceed as follows.  

Take an
$\varepsilon >0$ and choose
 measurable sets $\delta_\pm$ such that
$$
\delta_+\cup\delta_-=(\lambda_0-\varepsilon,
\lambda_0+\varepsilon)\setminus \{\lambda_0\},
$$
with $ \mu(\delta_\pm )>0. $ Set
$$
u_\varepsilon
(\lambda)=(\mu(\delta_+))^{-1}\chi_{\delta_+}(\lambda)
-(\mu(\delta_-))^{-1}\chi_{\delta_-}(\lambda).
$$
Clearly,  $u_\varepsilon\in L^2(\bbR;d\mu)$  and
$$
\int_\bbR u_\varepsilon(\lambda)d\lambda=0.
$$
Thus, $ u_\varepsilon \in \Dom (\dot \cB). $ One computes that
$$
\|u_\varepsilon\|^2=(\mu(\delta_+))^{-1}+(\mu(\delta_-))^{-1}.
$$
However,
$$
\|(\dot \cB-\lambda_0 I)u_\varepsilon\|^2\le \varepsilon^2\left (
(\mu(\delta_+))^{-1}+(\mu(\delta_-))^{-1}\right )=\varepsilon
^2\|u_\varepsilon\|^2
$$
which proves that $\lambda_0$ belongs to the core of  the spectrum
of $\dot A$ since $\varepsilon$ can be chosen  arbitrarily small.

The proof is complete.
\end{proof}



\begin{thebibliography}{99}


\bibitem{ABT} 
Yu.~Arlinskii, S.~Belyi, and E.~Tsekanovskii,
 Conservative realizations of Herglotz-Nevanlinna functions.
 Operator Theory: Adv. and Appl. 
{\bf 217}, Birkh\"auser/Springer Basel AG, Basel,
 2011.

\bibitem{AHS}
Yu.~Arlinski, S.~Hassi, H.~de Snoo, 
\textit{$Q$-functions of quasi-selfadjoint contractions}, 
Operator theory and indefinite inner product spaces, 
23--54, Oper. Theory Adv. Appl., {\bf 163}, Birkh\"auser, Basel, 2006.

\bibitem{AkG} 
N.~I.~Akhiezer and I.~M.~Glazman,
Theory of Linear Operators in
 Hilbert Space, Dover, New York, 1993.

\bibitem{dBR}
L.~de Branges and J.~Rovnyak,
Canonical models in quantum scattering theory. Perturbation Theory
and its Applications in Quantum Mechanics,
Proc. Adv. Sem. Math. Res. Center, U.S. Army, Theoret. Chem. Inst.,
University of       Wisconsin, Madison, WI, Wiley, New York,  295--392,
1965.

\bibitem{Br} 
M.~S.~Brodskii,
Triangular and Jordan representations of linear operators.  
Translations of Mathematical Monographs,
 Vol. 32. 
American Mathematical Society, Providence, R.I., 1971.


\bibitem{BL58} 
M.~S.~Brodskii, M.~S.~Liv\v{s}ic,
\textit{Spectral analysis of 
non-self-adjoint operators and intermediate systems},
Uspehi Mat. Nauk (N.S.) {\bf 13},  no. 1(79), (1958),
3--85 (Russian).



\bibitem{DM}
 V.~A.~Derkach and M.~M.~Malamud,
\textit{Generalized resolvents and the boundary value 
problems for Hermitian operators with gaps},
 J. Funct. Anal. 
{\bf 95} (1991), 1--95.


\bibitem {D}
 W.~F.~Donoghue,
\textit{On perturbation of spectra}, 
Commun. Pure and Appl. Math. 
{\bf 18} (1965),
 559--579.

\bibitem{GMT}
F.~Gesztesy, K.~A.~Makarov and E.~Tsekanovskii,
\textit{An addendum to Krein's formula},
J. Math. Anal. Appl. 
{\bf 222} (1998),
594--606.

\bibitem{GT} 
F.~Gesztesy and E.~Tsekanovskii,
\textit{On Matrix-Valued Herglotz Functions}, 
Math. Nachr. {\bf 218}
(2000), 61--138.

\bibitem{Ko80}
A.~N.~Kochubei, 
\textit{Characteristic functions of symmetric operators and their
 extensions}, 
Izv. Akad. Nauk Armyan. SSR Ser. Mat. {\bf 15}, no. 3,
(1980), 219--232
(Russian). 

\bibitem{K46}
M.~G.~Krein,
\textit{On resolvent of Hermitian operator with defect numbers $(m,m)$},
Dokl. Akad. Mauk SSSR {\bf 52}, no.  8, 
(1946), 657--660 
(Russian).


\bibitem{KL}
M.~G.~Krein and  H.~Langer,
\textit{\"Uber die $Q$-Funktion eines $\Pi$-hermiteschen Operators 
im Raume $\Pi_\kappa$ },
 Acta Sci. Math. (Szeged) 
{\bf 34} (1973), 
191--230. 
(German).


\bibitem{K} 
A.~V.~Kuzhel,
\textit{An analog of Krein's formula for resolvents of non-self-adjoint
 extensions of Hermitian operator},
 Teor. Funktsii Funktsional. Anal. i Prilozhen.
no. 36, (1981), 49--55
(Russian).


 
\bibitem{Ku}
 A.~V.~Kuzhel,
Characteristic functions and models of non-self-adjoint operators,
Kluwer Academic Publishers, Dordrecht, Boston, London, 1996.

\bibitem{MMog}
M.~M.~Malamud and  V.~I.~Mogilevskii, 
\textit{ Krein type formula for canonical resolvents of dual pairs of linear 
relations},
 Methods Funct. Anal. Topology {\bf 8}, no. 4, (2002)
72--100.

\bibitem{NV}
N.~K.~Nikol'ski and V.~I.~Vasyunin,
\textit{A unified approach to function models, and the transcription problem}.
The Gohberg anniversary collection, Vol. II
(Calgary, AB, 1988),
405--434, Operator  Theory: Adv.  and Appl. {\bf 41},
 Birkh\"auser, Basel, 1989.


\bibitem{NK} 
N.~K.~Nikol'ski and S.~V.~Khrushchev,
\textit{A functional model and some problems of the spectral
theory of functions}, 
Mathematical physics and complex analysis. 
Trudy Mat. Inst. Steklov. {\bf  176} (1987), 97--210
(Russian).
English transl.:  Proc. Steklov Inst. Math.,
no. 3, (1988), 101--214. 


\bibitem{L} 
M.~S.~Liv\v{s}ic,
\textit{On a class of linear operators in Hilbert space},
Mat. Sbornik  (2), {\bf 19} (1946), 239--262
(Russian); 
English transl.:  Amer. Math. Soc. Transl., (2), {\bf 13}
(1960), 61--83.



\bibitem{L1}
M.~S.~Liv\v{s}ic,
\textit{On spectral decomposition of linear non-self-adjoint operators},
 Mat. Sbornik (76) {\bf 34} (1954),  145--198  
(Russian);
 English transl.:  Amer. Math. Soc. Transl. (2) {\bf 5} (1957),
 67--114.


\bibitem{NF}
 B.~Sz.-Nagy and C.~Foias,
Harmonic analysis of operators on Hilbert space,
North-Holland, Amsterdam, 1970.


\bibitem{P} 
B.~S.~Pavlov,
\textit{Functional model and spectral singularities},
Scattering theory. Theory of oscillations
 pp. 113--121, Probl. Mat. Fiz., {\bf 9},
 Leningrad. Univ., Leningrad, 1979.
(Russian).


\bibitem{P2}
 B.~S.~Pavlov,
\textit{Dilation theory and spectral
analysis of nonselfadjoint differential operators},
pp.3--69 in: Operator theory in linear spaces, Proc. 7th
Winter School, Drogobych, 1974 (ed. B. Mityagin), Central Econom.,
Institute, Moscow, 1975 
(Russian); 
English transl.:
Amer. Math. Soc. Transl. (2) {\bf 115} (1981), 103--142.

\bibitem{Ph} 
R.~S.~Phillips, 
\textit{On dissipative operators},
 in  ``Lecture Series in Differential
Equations," Vol. II (A.~K.~Aziz, ed.) von Nostrand, (1969), 65--113.



\bibitem{S1965}
Sh.~N.~Saakjan,
\textit{On the theory of the resolvents of a symmetric operator 
with infinite deficiency indices}, 
Dokl. Akad. Nauk Armjan. SSR { \bf 41} (1965), 193--198 
(Russian).

\bibitem{St68}
A.~V.~Shtraus,
\textit{On the extensions and the characteristic function of a 
symmetric operator}, 
Izv. Akad. Nauk SSR, Ser. Mat. {\bf 32} (1968), 186--207.

\bibitem{St98}
A.~V.~Shtraus,
\textit{Functional models and generalized spectral functions
 of symmetric operators},
  Algebra i Analiz {\bf 10}, no. 5, (1998), 1--76 
(Russian);
English transl.: St. Petersburg Math. J. 
{\bf 10}, no. 5, (1999), 733--784.

\bibitem{Si} 
B.~Simon,
\textit{Spectral analysis of rank
one perturbations and applications},
CRM Proceedings and Lecture Notes {\bf 8} (1995),
 109--149.


\bibitem{TS} 
E.~R.~Tsekanovskii and  Yu.~L.~Smuljan, 
\textit{The theory of biextensions of operators in rigged Hilbert spaces. 
Unbounded operator colligations and characteristic functions}, 
 Russian Math. Surv. {\bf 31} (1977), 73--131. 
\end{thebibliography}
\end{document}